\newcommand{\expect}[1]{\mathbb{E}\bigg[#1\bigg]}
\newcommand{\expectm}[1]{\mathbb{E}\big[#1\big]}
\newcommand{\bv}[1]{{\boldsymbol{#1} }}
\newcommand{\script}[1]{{{\cal{#1} }}}
\newcommand{\tsf}[1]{\textsf{#1}}
\newtheorem{fact}{\textbf{Fact}}
\newtheorem{lemma}{\textbf{Lemma}}
\newtheorem{theorem}{\textbf{Theorem}}
\newtheorem{definition}{\textbf{Definition}}
\begin{document}

\title{A Benes Packet Network}
\author{\large{Longbo Huang, Jean Walrand}  
\thanks{Longbo Huang    (http://www.eecs.berkeley.edu/$\sim$huang) and Jean Walrand (http://www.eecs.berkeley.edu/$\sim$wlr)  are with the EECS department of University of California, Berkeley, CA, 94720.  
} 
\markboth{Draft}{Huang}
}

\maketitle

\begin{abstract}
Benes networks are constructed with simple switch modules and have many advantages, including small latency and requiring only an almost linear number of switch modules. As circuit-switches, Benes networks are rearrangeably non-blocking, which implies that they are full-throughput as packet switches, with suitable routing.

Routing in Benes networks can be done by time-sharing permutations. However, this approach requires centralized control of the switch modules and statistical knowledge of the traffic arrivals. We propose a backpressure-based routing scheme for Benes networks, combined with end-to-end congestion control.  This approach achieves the maximal utility of the network and requires only four queues per module, independently of the size of the network.

\end{abstract}
\begin{keywords}
Benes Network, Dynamic Control, Stochastic Network Optimization, Queueing
\end{keywords}

\section{Introduction}
Data centers have gradually become one of our most important computing resources. For instance, search engines, web emails such as Gmail and Hotmail, social network websites such as Facebook, and data processing applications such as Hadoop are provided by data centers. 
Consequently, the networking of servers and resource allocation in data centers have become important problems.

We develop a networking solution, a \emph{Benes packet network}, which consists of a Benes architecture, a flow utility maximization mechanism, and a backpressure-based scheduling algorithm. 
Specifically, we propose interconnecting the data center servers using a Benes network built with simple commodity switch modules. We formulate the resource allocation objective as a network flow utility maximization problem to guarantee a fair share of the network resources. Lastly, we develop a low-complexity backpressure-based scheduling algorithm, called Grouped-Backpressure (\tsf{G-BP}), to achieve the optimal system performance. 
The \tsf{G-BP} algorithm is provably optimal and automatically handles changing traffic. Our approach only requires each switch module to maintain \emph{four} queues, independently of  the network size, and hence can easily be implemented in practice. 


Many papers explore networking solutions for data centers. 
\cite{jellyfish-12} proposes using a random graph based approach to enable incremental network growth for data centers. \cite{vl2-sigcomm09} proposes a  network architecture based on Clos network and random traffic splitting. \cite{dcell-sigcomm08} develops a hierarchical network structure for data centers. \cite{scafida-sigcomm10} uses the preferential attachment approach to design network topologies for data centers. \cite{scalable-sigcomm08} proposes a fat-tree based network architecture.  
\cite{camdoop-nsdi-12} develops a MapReduce-like system based on a cube-like  architecture to exploit the in-network aggregation possibilities. \cite{cthrough-sigcomm10} designs optical networks for data centers. 
However, we note that the aforementioned works mostly focus on designing the network architecture and achieving uniform load balancing. Hence, the proposed  solutions do not immediately apply to problems where different flows have different service requirements. Moreover, the solutions developed in the above works lack system performance guarantees. 

In this work, we aim at obtaining a network solution that combines practicality, generality, provable optimality, and low complexity. Specifically, we propose interconnecting the data center servers by a Benes network. 
As circuit-switches, Benes networks are known to be rearrangeably non-blocking and  can easily be built with only an almost linear number of simple switch modules in the network size \cite{benes-net}, \cite{benes-net-book}. Thus, adopting the Benes network architecture not only guarantees high system throughput and low end-to-end packet delay (if routing and scheduling are done properly), but also eliminates the need for employing expensive switch devices whose cost does not scale easily as the data center size increases. 
Under the Benes network architecture, we establish a mathematical formulation for determining the allocation of  network resources to cope with the heterogeneity of the data traffic service requirements. Our formulation leverages the network utility maximization framework \cite{Kellyelastic97}, \cite{layering-chiang07}, which has been proven to be a general mechanism for handling network resource allocation problems. 
%

Finally, to reap the full benefits of the Benes network architecture and the resource allocation framework in a practical manner, we develop a routing and scheduling algorithm that has provable system performance guarantees and  a very low implementation complexity. Our algorithm is constructed based on the recently developed backpressure network optimization technique \cite{neelynowbook}, combined with an end-to-end congestion control mechanism. However, different from previous backpressure algorithms, e.g., \cite{neelyfairness}, \cite{ying_clusterbp_infocom08}, \cite{buisrikant_infocom09}, which either require that  the number of queues each switch module has to maintain is proportional to the network size, or only apply to problems with single-path routing, our algorithm uses a novel \emph{traffic grouping} idea and allows us to use only \emph{four} queues per switch module regardless of the network size. Moreover, our algorithm automatically explores all the possible routes to fully utilize network capacity. These distinct  features make our algorithm very suitable for practical implementation. 
%


This paper is organized as follows. In Section \ref{section:model}, we present the  system model and state our objective. In Section \ref{section:labeling}, we set up the notations. Then, we explain the intuition of our design approach and describe all the needed components of the Group-Backpressure (\tsf{G-BP}) algorithm in Section \ref{section:intuition}. We present the \tsf{G-BP} algorithm and analyze its performance in Section \ref{section:gbp}. 
Simulation results are presented in Section \ref{section:simulation}. We conclude the paper in Section \ref{section:conclusion}.

\vspace{-.1in}
\section{System model}\label{section:model}
We consider the system shown in Fig. \ref{fig:benes-16x16-phy}, where a Benes network connects a set of communicating servers. In this system, each rectangle is a \emph{switch module} having two input and two output links. Each link has a capacity of $1$ packet/slot. The smaller nodes are the \emph{servers}. 
Traffic flows are generated from the servers on the left, called \emph{input servers}, and are going to the servers on the right, called \emph{output servers}. \footnote{It is straightforward to extend our results to include bi-directional traffic flows.} 
We assume that the system operates in slotted time, i.e., $t\in\{0, 1, 2, ...\}$.  The discrete time assumption is for convenience of the analysis.  The actual network would operate in an asynchronous way with variable length packets.
%

\begin{figure}[cht]
  \centering
  \vspace{-.1in}
\includegraphics[height=1.8in, width=3.0in]{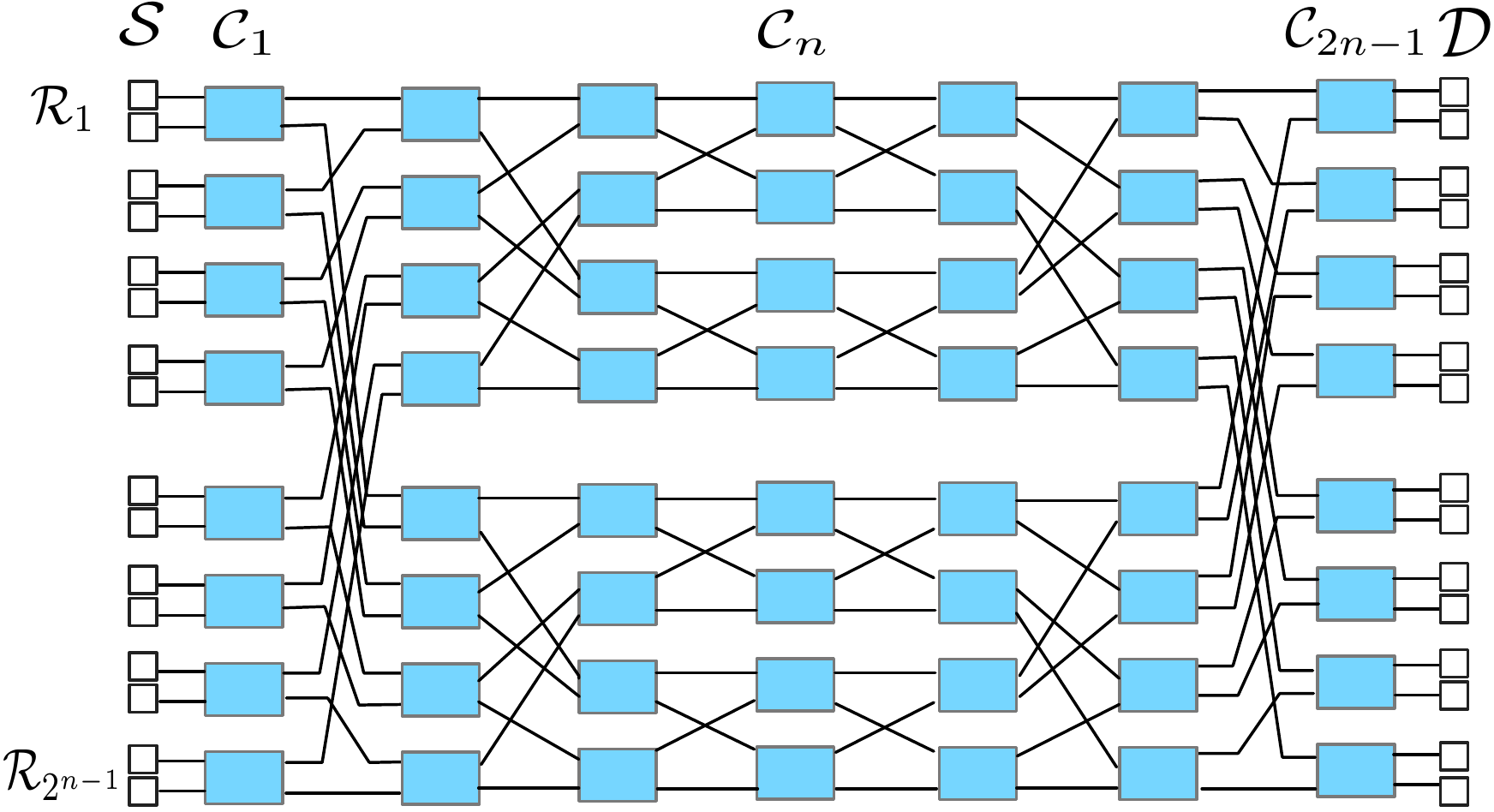}
  \vspace{-.1in}
  \caption{A $16\times16$ Benes network connecting $16$ input servers ${\cal S}$   to $16$ output servers ${\cal D}$. The rectangles are the switch modules that form the Benes network. $\script{R}_i$ refers to row $i$ of the Benes network and $\script{C}_j$ refers to column $j$. } 
  \label{fig:benes-16x16-phy}
  \vspace{-.1in}
\end{figure}

\vspace{-.1in}
\subsection{Admission control and flow utility}
We label the flows according to their source and destination servers.
Specifically, we call the traffic entering from input server $s$ and going to output server $d$  the $(s, d)$ flow. 
We use $A_{sd}(t)$ to denote the number of $(s, d)$ packets generated at input server $s$ at time $t$. 
We assume that for every $(s,d)$ flow, the random variables $\{A_{sd}(t), t = 0, 1, \ldots\}$ are i.i.d. and have mean $\lambda_{sd}=\expectm{A_{sd}(t)}$. Our results can be extended to incorporate much more general arrival processes, e.g., Makov-modulated arrivals. 
We also assume that there exists some finite constant $A_{\max}$ such that $0\leq A_{sd}(t)\leq A_{\max}$ for all $(s, d)$ and for all time $t$. 

In every time slot $t$, each input server performs admission control to determine how many packets to inject into the network.  We denote $0\leq R_{sd}(t)\leq A_{sd}(t)$ the number of $(s, d)$ flow packets \emph{actually}  admitted by input server $s$ for transmission at time $t$.  We then denote the average rate of the $(s, d)$ flow packets by $r_{sd}$, defined as: \footnote{Throughout this paper, we assume that all the  limits exist.}
\begin{eqnarray}
r_{sd} \triangleq \lim_{T\rightarrow\infty}\frac{1}{T}\sum_{t=0}^{T-1}\expectm{R_{sd}(t)}. \label{eq:avg-rate}
\end{eqnarray}
Each $(s, d)$ flow is associated with a utility function $U_{sd}(r_{sd})$, which is concave increasing in its average rate $r_{sd}$. We assume that the utility functions have finite first derivatives and denote $\beta$ their maximum value, i.e., 
\begin{eqnarray}
\beta\triangleq\max_{sd}U'_{sd}(0). \label{eq:beta-def}
\end{eqnarray}

\vspace{-.1in}
\subsection{Stability and objective} 
In this paper, we say that a queue with queue size process $\{Q(t)\geq0, t=0, 1, 2,...\}$ is stable if:
\begin{eqnarray}
\limsup_{T\rightarrow\infty} \frac{1}{T}\sum_{t=0}^{T-1}\expectm{Q(t)}<\infty.
\end{eqnarray}
Then,  we say that a network is  stable if all the queues in the network are stable, and call a routing and scheduling policy that ensures network stability a stabilizing policy. 
We use $\Lambda_n$ to denote the capacity region of a $2^n\times2^n$ Benes network, being the set of arrival vectors under which there exist stabilizing routing and scheduling policies. 

Depending on the routing and scheduling algorithm, the network queueing structure can be quite different. 
Our objective is to find a low-implementation-complexity stabilizing routing and scheduling policy that  maximizes the aggregate flow utility of the network, i.e., 
\begin{eqnarray}
\max: &&U(\bv{r}) \triangleq \sum_{s, d} U_{sd}(r_{sd}) \label{eq:utility-def}\\
\text{s.t.} && \bv{r}\in\Lambda_n, \nonumber
\end{eqnarray}
where $\bv{r}=(r_{sd}, \,\forall\, (s, d))$ with $r_{sd}$ being the average rate of the $(s, d)$ flow defined in (\ref{eq:avg-rate}). 
We denote by $\bv{r}^{\textsf{opt}}$ the rate vector that achieves the optimal utility over  all stabilizing policies. 

Note that our formulation (\ref{eq:utility-def}) is indeed very general. The heterogeneity of traffic flow service requirements can easily be taken into account by designing appropriate utility functions. Also note that, although our system model is similar to those in \cite{neelyfairness}, in our paper, the queueing structure is also part of the algorithm design problem.

\vspace{-.1in}
\subsection{Discussion}
The problem of optimal routing and scheduling in a Benes network can be solved by using the well-known backpressure routing algorithm \cite{neelynowbook}. However, this approach requires each node to maintain a separate queue for each output server. Thus, each node has to maintain $2^{n}$ queues, which is not practical  when the size of the Benes network (number of servers) increases. 
Recent works \cite{ying_clusterbp_infocom08} and \cite{buisrikant_infocom09} propose backpressure-based algorithms that use much fewer queues. However, the algorithm in  \cite{ying_clusterbp_infocom08} requires nodes to maintain a separate queue for each cluster of the network nodes and needs a pre-defined clustering algorithm, whereas the method in \cite{buisrikant_infocom09} is designed for single-path routing. 
Below, we develop a novel low-complexity approach called Grouped-Backpressure (\tsf{G-BP}). Our approach allows us to use only \emph{four} queues per node regardless of the network size. 

\vspace{-.1in}
\section{Benes network structure and labeling}\label{section:labeling}
In this section, we explain the structure of Benes networks  and set up our notations. 

\vspace{-.1in}
\subsection{Benes network construction}\label{section:benes-connect}
We first explain how a $2^n\times2^n$ Benes network is constructed \cite{benes-net} \cite{benes-net-book}: 
Start with a basic $2\times2$ Benes network as in Fig. \ref{fig:benes-2x2}. Then, construct a $2^n\times2^n$ Benes network as follows: 

{(Step I)-{Concatenation:}} Vertically concatenate two $2^{n-1}\times2^{n-1}$ Benes networks. Call them the \emph{upper subnetwork} and the \emph{lower subnetwork}, e.g., $m_3$ and $m_4$ in Fig. \ref{fig:benes-4x4}. 
Then, horizontally place two columns of $2^{n-1}$ basic $2\times2$ modules, one on each side of the concatenated subnetworks. Call the modules on the left of the concatenated subnetworks the \emph{input switch modules}, e.g., $m_1$ and $m_2$, and the modules on the right the \emph{output switch modules}, e.g., $m_5$ and $m_6$. 

{(Step II)-{Connect input modules:}} Connect the upper output link of the input module in row $k$ to the $k^{\text{th}}$ input link of the upper subnetwork, and connect its  lower output link to the $k^{\text{th}}$ input link of the lower subnetwork. 

{(Step III)-{Connect output modules:}} Connect the $k^{\text{th}}$ output link of the upper subnetwork to the upper input link of the $k^{\text{th}}$ output module, i.e., the output module in row $k$, and connect the $k^{\text{th}}$ output link of the lower subnetwork to the lower input link of the $k^{\text{th}}$ output module. 
\begin{figure}[cht]
  \centering
    \vspace{-.1in}
    \subfigure[A basic $2\times2$ Benes network]{\label{fig:benes-2x2}\includegraphics[height=0.4in, width=1.0in]{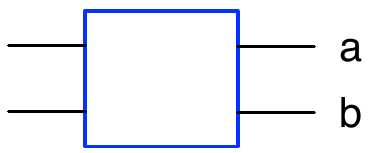}}
  \subfigure[A $4\times4$ Benes network]{\label{fig:benes-4x4}\includegraphics[height=1.0in, width=2.5in]{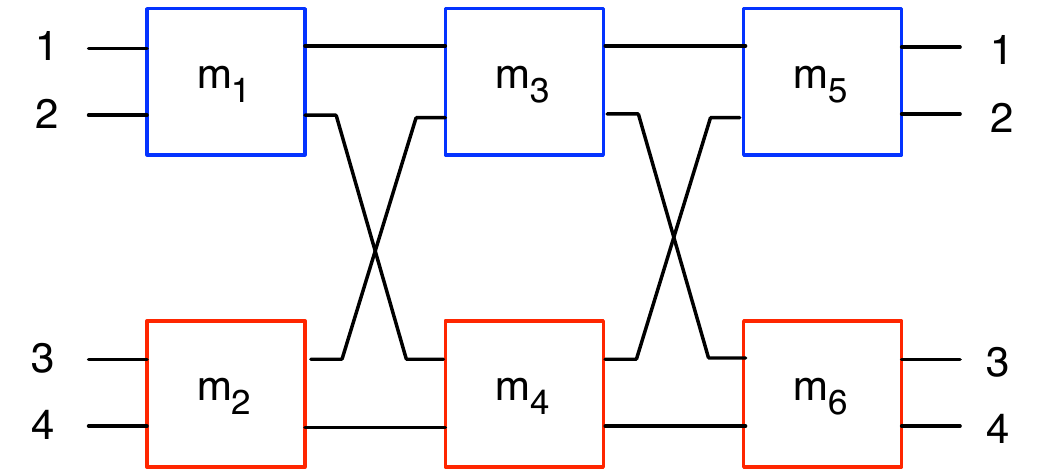}}
  \subfigure[A general $2^n\times2^n$ Benes network]{\label{fig:benes-general}\includegraphics[height=1.0in, width=2.5in]{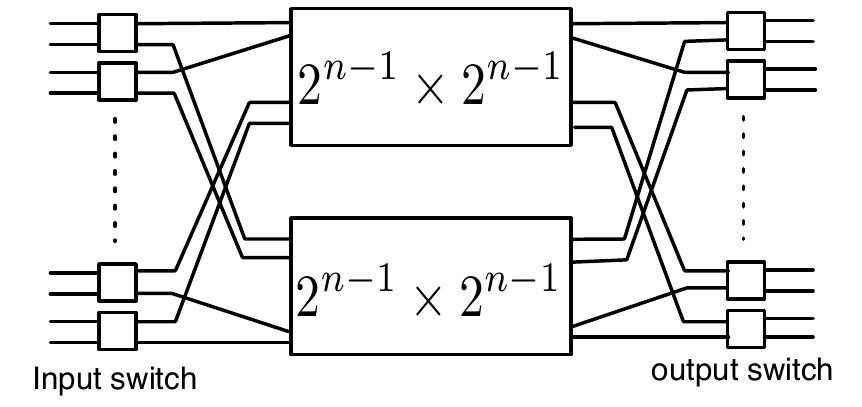}}
  \caption{The structure of Benes networks.}
  \label{fig:benes-demo}
    \vspace{-.1in}
\end{figure}

\vspace{-.1in}
\subsection{Labeling a Benes network with servers} 
We first specify how we label  a $2^n\times2^n$ Benes network. 
We denote $\mathbb{B}_n$  the $2^n\times2^n$ Benes network (excluding the input and output servers). Then, we divide the Benes network into rows and columns. In a $2^n\times 2^n$ Benes network,  there are $2^{n-1}$ rows, denoted by $\{\script{R}_i, i=1, ..., 2^{n-1}\}$. We then denote the $2n-1$ columns by $\{\script{C}_j, j=1, ..., 2n-1\}$. 
For any node $m$ in the Benes network, we use $i_m$ and $j_m$ to denote its row number and column number. 
For the input and output servers connecting to the Benes network, we label them using their row numbers. The set of input servers are denoted by $\script{S}=\{1, 2, ...,2^n\}$ and the set of output servers are  denoted by $\script{D}=\{1, 2, ...,2^n\}$. 
Note that both $\script{S}$ and $\script{D}$ have $2^n$ rows (the small squares in Fig. \ref{fig:benes-16x16-phy}). 
As in Section \ref{section:benes-connect}, we call the nodes in $\script{C}_1$ the \emph{input switch} modules and the nodes in $\script{C}_{2n-1}$ the \emph{output switch} modules. 

From the construction rules of Benes networks and the way the servers are connected to the Benes network, we see that for every node $m\in\mathbb{B}_n$, there are two nodes in column $\script{C}_{j_m+1}$ to which it connects (for a node $m\in\script{C}_{2n-1}$, it connects to two nodes in $\script{D}$). 
We denote the node with a smaller row number by $m_{u}$ and the other one by $m_{l}$. 
There are also two nodes in column $\script{C}_{j_m-1}$ that connect  to $m$ (if $m\in\script{C}_1$, there are two nodes in $\script{S}$ connecting to it). We denote these two nodes by $\script{M}_m$. Among these nodes, those that have $m$ as their next hop with a smaller row number are denoted by $\script{M}_m^u$, and the other nodes having $m$ as their next hop node with a larger row number are denoted by $\script{M}_{m}^l$, i.e., 
\begin{eqnarray*}
&&\script{M}_m^u = \{m' \in\script{C}_{j_m-1}\,\,|\,\, m'_u=m\}, \\
&&\script{M}_m^l = \{m' \in\script{C}_{j_m-1}\,\,|\,\, m'_l=m\}. 
\end{eqnarray*}
For $m\in\script{C}_1$, we simply use $\script{M}_m$ to denote the input servers  that connect to it. For each input server $s\in\script{S}$, we use $m(s)$ to denote the node in $\script{C}_1$ it connects to. 
We call the servers in rows $1$ to $2^{n-1}$ the \emph{upper division} servers, and call all  the other servers the \emph{lower division} servers. 
We then call a flow whose destination is an upper division server an \emph{upper division flow}. Otherwise it is a \emph{lower division flow}.  

For a $2^n\times2^n$ Benes network $\mathbb{B}_n$, we define the nodes in $\script{C}_n$ as the \emph{partition} nodes. 
%
From the construction rules of the Benes networks, we first have the following observation: 
\begin{fact}\label{fact:partition-node-coincide}
For a $2^n\times2^n$ Benes network, its partition nodes coincide with the partition nodes of its two $2^{n-1}\times2^{n-1}$ subnetworks. 
$\Diamond$
\end{fact}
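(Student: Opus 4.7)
The plan is to prove Fact \ref{fact:partition-node-coincide} directly from the recursive construction rules (Steps I--III in Section \ref{section:benes-connect}), essentially by column-index bookkeeping. Since a $2^n\times 2^n$ Benes network $\mathbb{B}_n$ has $2n-1$ columns, the partition column $\script{C}_n$ is the middle one. I will locate, within the global column indexing of $\mathbb{B}_n$, the middle columns of the two embedded subnetworks and show they both land in $\script{C}_n$, then verify that together they account for every node of $\script{C}_n$.

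First I would identify where each $2^{n-1}\times 2^{n-1}$ subnetwork sits inside $\mathbb{B}_n$. By Step I, $\mathbb{B}_n$ is obtained by placing one column of $2\times 2$ input switches (the column $\script{C}_1$ of $\mathbb{B}_n$), then the two vertically concatenated $2^{n-1}\times 2^{n-1}$ subnetworks, then one column of $2\times 2$ output switches (the column $\script{C}_{2n-1}$ of $\mathbb{B}_n$). Each $2^{n-1}\times 2^{n-1}$ subnetwork has $2(n-1)-1 = 2n-3$ columns, so its local columns $1,2,\dots,2n-3$ correspond respectively to global columns $2,3,\dots,2n-2$ of $\mathbb{B}_n$. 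In other words, the global column index equals the local column index plus one.

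Next I apply the definition of partition nodes to each subnetwork. By definition, the partition nodes of a $2^{n-1}\times 2^{n-1}$ Benes network lie in its local column $n-1$ (the middle of its $2n-3$ columns). Using the index shift above, this local column $n-1$ corresponds to the global column $(n-1)+1 = n$, i.e., to $\script{C}_n$ of $\mathbb{B}_n$. Hence every partition node of either subnetwork is a node in $\script{C}_n$.

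Finally I check that the union of the two subnetworks' partition nodes is all of $\script{C}_n$, not a proper subset. Each $2^{n-1}\times 2^{n-1}$ subnetwork contains $2^{n-2}$ nodes per column, so the two together contribute $2^{n-1}$ nodes in global column $\script{C}_n$; this matches the row count of $\mathbb{B}_n$, confirming equality. The argument is mostly bookkeeping on column indices; the only place requiring any care is the off-by-one alignment between local and global column numbering, which I would make explicit by stating that the concatenated subnetworks occupy precisely the interior columns $2,\dots,2n-2$ of $\mathbb{B}_n$.
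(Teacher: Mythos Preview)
Your argument is correct and is precisely the column-index bookkeeping that the paper implicitly relies on: the paper states this fact as an immediate observation ``from the construction rules of the Benes networks'' without giving any further proof, and your write-up simply makes explicit the off-by-one shift (local column $n-1$ maps to global column $n$) and the row-count check that justify it.
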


Below, we denote the upper outgoing link of a switch module by link $a$ and the lower outgoing link by link $b$ (see Fig. \ref{fig:benes-2x2}). We use $\script{O}_m^a$ to denote the set of output servers that can be reached by traversing the upper outgoing link $a$ of node $m$ and use $\script{O}_m^b$ to denote the set of output servers that can be reached by traversing  link $b$. 
We then have the following simple lemma, which can be seen from the construction rules of Benes networks. 
\begin{lemma}\label{lemma:unique-path}
(a) Starting from any partition node $m\in\script{C}_n$,   there is a unique path to any  output server $d\in\script{D}$. (b) For every node $m\in\script{C}_{n+l}, l\geq0$, we have: 
\begin{eqnarray}
\hspace{-.3in}&&\script{O}_m^a=\{ \kappa_m2^{n-l}+1, ...,   (\kappa_m+\frac{1}{2})2^{n-l}\}, \label{eq:output-set-a}\\
\hspace{-.3in}&&\script{O}_m^b=\{ (\kappa_m+\frac{1}{2})2^{n-l}+1, ...,   (\kappa_m+1)2^{n-l}\}, \label{eq:output-set-b}
\end{eqnarray}
where $\kappa_m\triangleq (i_m-1) \mod 2^l$. 
$\Diamond$
\end{lemma}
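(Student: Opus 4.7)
The plan is to prove both parts jointly by strong induction on $n$, the size parameter of $\mathbb{B}_n$, leveraging Fact~\ref{fact:partition-node-coincide} together with the wiring rules of Steps~I--III. The base case $n=1$ is immediate: the single $2\times 2$ module in $\script{C}_1$ directly links to servers $1$ and $2$ via its upper and lower outputs, which verifies part (a) trivially and matches the part (b) formula with $\kappa_m=0$, $l=0$.

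For the inductive step on part (a), fix a partition node $m\in\script{C}_n$ of $\mathbb{B}_n$. By Fact~\ref{fact:partition-node-coincide}, $m$ is also a partition node of exactly one of the two $2^{n-1}\times 2^{n-1}$ subnetworks (upper if $i_m\leq 2^{n-2}$, lower otherwise), and the induction hypothesis yields a unique path from $m$ to each of that subnetwork's $2^{n-1}$ outputs. Each subnetwork output of index $k$ feeds (via Step~III) into the $k^{\text{th}}$ output module of $\mathbb{B}_n$, whose upper and lower links serve the distinct destinations $2k-1$ and $2k$. Concatenating the two legs yields unique paths from $m$ to every destination in $\{1,\dots,2^n\}$.

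For part (b) I run a secondary induction on $l$. The case $l=0$ follows by applying part (b) of the hypothesis with parameter $0$ to the relevant subnetwork: $m$'s upper link reaches subnetwork outputs $\{1,\dots,2^{n-2}\}$, which via the output modules cover destinations $\{1,\dots,2^{n-1}\}$ (and the lower link is symmetric), matching $\kappa_m=0$. For $1\leq l\leq n-2$, the node $m\in\script{C}_{n+l}$ lies inside one of the subnetworks; the hypothesis at parameter $l$ gives contiguous subnetwork-output intervals, which the output modules (sending subnetwork output $k$ to $\{2k-1,2k\}$) double into the claimed destination intervals, turning the factor $2^{(n-1)-l}$ into $2^{n-l}$ exactly as required. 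For $l=n-1$, $m$ is itself an output module and the formula collapses to the singletons $\script{O}_m^a=\{2i_m-1\}$, $\script{O}_m^b=\{2i_m\}$, verified directly from the wiring.

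The main obstacle is the row-index bookkeeping under the subnetwork-to-full-network embedding, namely verifying the invariant $\kappa_m^{\text{sub}}=\kappa_m$. For the upper subnetwork this is trivial since $i_m^{\text{sub}}=i_m$, but for the lower subnetwork $i_m^{\text{sub}}=i_m-2^{n-2}$, so preservation of the mod-$2^l$ residue requires $2^{n-2}$ to be divisible by $2^l$, i.e.\ $l\leq n-2$---precisely the range covered by the sub-network step of the $l$-induction, with the $l=n-1$ case handled separately as above. Once this invariant is pinned down, the remaining verifications follow mechanically from Steps~II and~III.
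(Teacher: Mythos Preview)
Your argument is correct. The paper does not actually give a proof of this lemma: immediately before the statement it simply says that the result ``can be seen from the construction rules of Benes networks,'' and no further justification appears anywhere in the text or appendices. Your induction on $n$, combined with the case analysis on $l$ (what you call a secondary induction is really just a case split, since each case at level $n$ appeals only to the level-$(n-1)$ hypothesis and not to smaller $l$), is precisely the natural way to make that informal claim rigorous. The row-index bookkeeping you flag---that $\kappa_m^{\text{sub}}=\kappa_m$ requires $2^l\mid 2^{n-2}$, forcing the separate treatment of $l=n-1$---is the only nontrivial detail, and you handle it correctly.
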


 \vspace{-.1in}
\section{Intuition and key components of Grouped-Backpressure}\label{section:intuition}
In this section, we present the idea and all the needed components for our Grouped-Backpressure algorithm (\tsf{G-BP}), which will be used to achieve the optimal flow utility under the Benes network architecture. 

\vspace{-.1in}
\subsection{The idea}
The idea of Grouped-Backpressure is to ``group'' all the flows into two mixed flows, the upper division flow and the lower division flow. Then, we construct a scheme for routing the \emph{mixed} traffic in the first half of the network based on a fictitious reference system.  This approach allows us to use very few queues per node. 
However, due to this traffic mixing, we lose the ability to control each individual flow inside the network. Hence, the flows can be routed arbitrarily inside the network, in which case certain nodes may receive more  traffic than they can handle and become unstable. 
In order to resolve this problem, we impose a special queueing structure at each node to ensure that routing and scheduling is done in a fully symmetric manner. 
With this approach, we guarantee that every flow is split into sub-flows with equal rates and routed through the partition nodes. 
In the second half of the network, by Lemma \ref{lemma:unique-path}, each packet will traverse a unique path to its destination. Hence, we will do a ``free-flow'' routing. 
Using the symmetric structure of the Benes network, we then show that the \tsf{G-BP} algorithm can stabilize the network and achieve maximum utility. Our approach is demonstrated in Fig. \ref{fig:gbp-demo}. 
\begin{figure}[cht]
  \centering
  \vspace{-.1in}
\includegraphics[height=1.8in, width=3.0in]{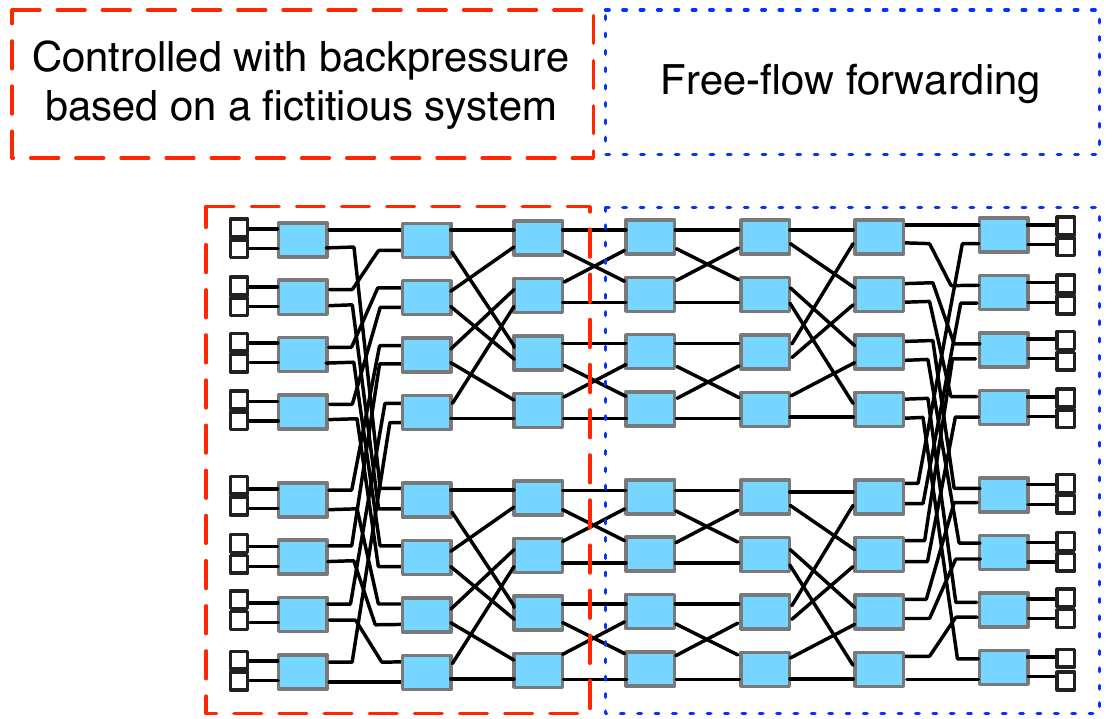}
  \vspace{-.1in}
  \caption{The pictorial illustration of the \tsf{G-BP} algorithm. The first half of the network is controlled by a backpressure-like algorithm based on a fictitious reference system. The second half of the network uses a ``free-flow'' scheme for packet delivery.}
  \label{fig:gbp-demo}
    \vspace{-.1in}
\end{figure}

\vspace{-.1in}
\subsection{A fictitious reference system}
In order to guide the routing and scheduling of the grouped traffic, we create a fictitious reference system as follows. 
\begin{enumerate}
\item Remove all the nodes in columns $n+1$ to $2n-1$.
\item Create two fictitious destination nodes $D_1$ and $D_2$, where $D_1$ represents the common destination for the upper division flows and $D_2$ represents the common destination for the lower division flows. 
\item Connect each partition node, i.e., a node in $\script{C}_n$, to $D_1$ with a link of capacity $1$ packet/slot and to  $D_2$ with a link of capacity $1$ packet/slot. 
\end{enumerate}
An example of the fictitious system is shown in Fig. \ref{fig:benes-16x16-fic} for a $16\times16$ Benes network. 
The  fictitious system will be used as a reference system to guide us on serving the grouped traffic. Specifically, we will design a backpressure-based algorithm for the fictitious system, and use the \emph{exact} same actions to control the nodes in columns $1$ to $n-1$ in the physical system. 
This approach has the useful property that it allows us to use only  $4$ queues per node.  
%

%
\begin{figure}[cht]
  \centering
    \vspace{-.1in}
\includegraphics[height=1.8in, width=3.0in]{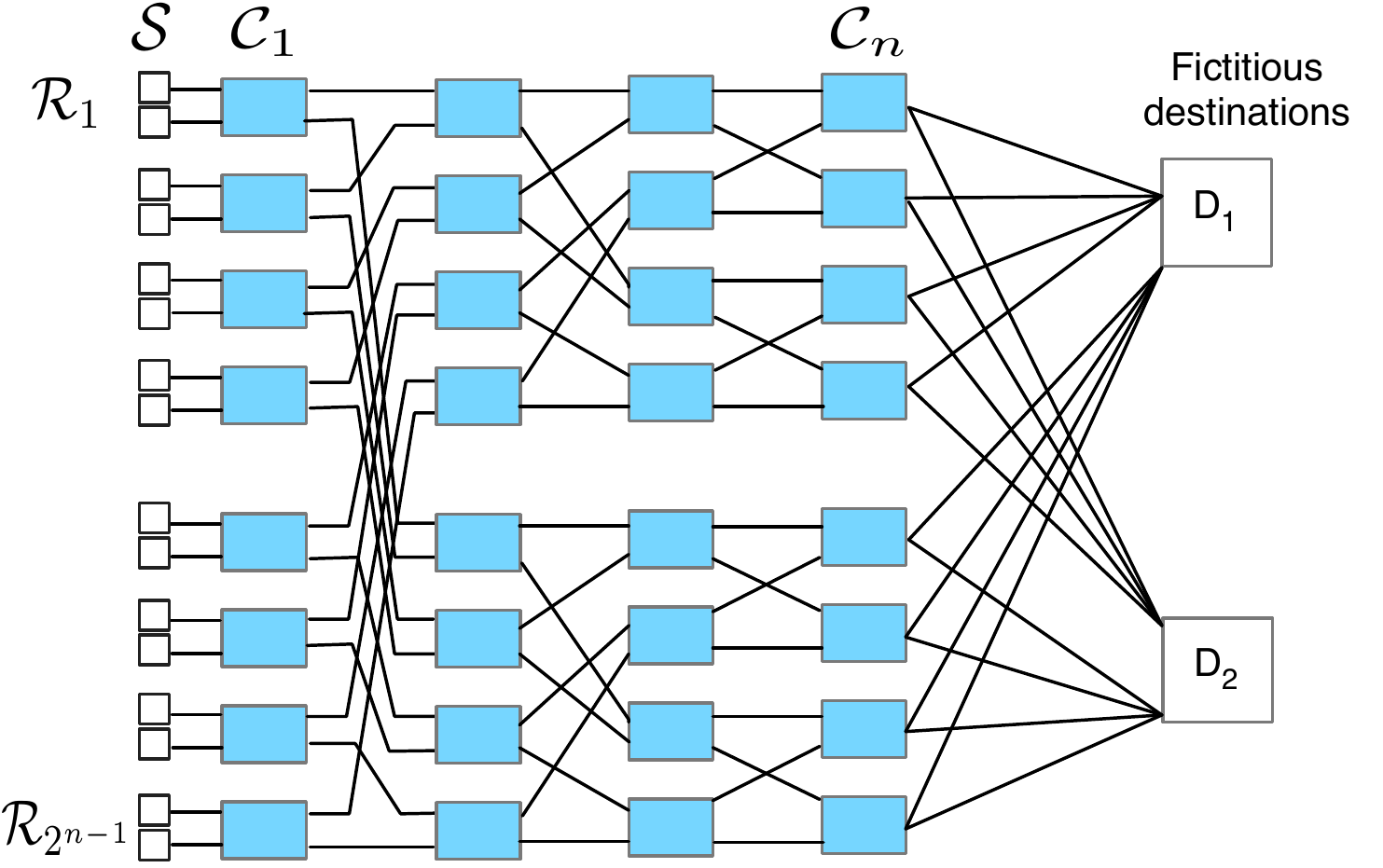}
  \vspace{-.1in}
  \caption{The fictitious reference system for the $16\times16$ Benes network with servers.}  \label{fig:benes-16x16-fic}
    \vspace{-.1in}
\end{figure}

\vspace{-.1in}
\subsection{Queue structure and load balancing}
Since in the reference system we only have $2$ destinations and do not distinguish flows inside the network, if routing is not done carefully, it can happen that most of the traffic going to an  output port is routed to a single partition node and causes instability of the node. In order to resolve this issue, we  impose a special queueing structure on the switch nodes to balance all the traffic, so that each flow is equally split among all possible paths and routed to  the partition nodes. Doing so, we guarantee that as long as the traffic rate is supportable (will be explained later), no node will be overwhelmed. 

We now specify our queueing structure for both the fictitious system and the physical system:  
\subsubsection{Input servers in both systems} For each input server $s\in\script{S}$, we maintain $2$ queues per node as follows: 
\begin{itemize}
\item $Q_s^{\tsf{U}}(t)$: number of \emph{upper division} flow packets stored at input server $s$;
\item $Q_s^{\tsf{L}}(t)$: number of \emph{lower division} flow packets stored at input server $s$. 
\end{itemize}
These two queues evolve according to the following dynamics: 
\begin{eqnarray}
\hspace{-.35in}&&Q_{s}^{\script{T}}(t+1) = \big[Q_{s}^{\script{T}}(t) - \mu_{s, m(s)}^{\script{T}}(t)  \big]^+   + R_{s}^{\script{T}}(t). 
\label{eq:queue-source-q}
\end{eqnarray}
Here the notation $[x]^+=\max[x, 0]$ and the notation $\script{T}\in\Omega_s \triangleq\{\script{\tsf{U}, \tsf{L}}\}$ denotes the ``type'' of the traffic at the input servers, and $R_{s}^{\script{T}}(t)$ denotes the aggregate arrival to  $Q_s^{\script{T}}(t)$, i.e., 
\begin{eqnarray}
\hspace{-.7in}&&R_{s}^{\tsf{U}}(t) = \sum_{d\leq 2^{n-1}}R_{sd}(t), \,\, R_{s}^{\tsf{L}}(t) = \sum_{ d>2^{n-1}}R_{sd}(t). \label{eq:input-server-rate}
\end{eqnarray}

\subsubsection{Switch modules in columns $1$ to $n-1$ in both systems} We maintain $4$ queues per node as follows: 
\begin{itemize} 
\item $Q_{m}^{\textsf{UU}}(t)$: number of \emph{upper division} flow packets that will be routed through $m_u$; 
\item $Q_{m}^{\textsf{UL}}(t)$: number of \emph{upper division} flow packets that will be routed through $m_l$; 
\item $Q_{m}^{\textsf{LU}}(t)$: number of \emph{lower division} flow packets that will be routed through $m_u$; 
\item $Q_{m}^{\textsf{LL}}(t)$: number of \emph{lower division} flow packets that will be routed through $m_l$. 
\end{itemize} 
Now define $\Omega_B\triangleq\{\tsf{UU}, \tsf{UL}, \tsf{LU}, \tsf{LL}\}$ and use $\script{T}\in\Omega_B$ to denote the type of these queues at the switch nodes. We see that the queues evolve  according to the following dynamics: 
\begin{eqnarray}
\hspace{-.35in}&&Q_{m}^{\script{T}}(t+1) \label{eq:queue-bernes-net}\\
\hspace{-.35in}&&\qquad\quad \leq \big[Q_{m}^{\script{T}}(t) - \mu_{m, m(\script{T})}^{\script{T}}(t)  \big]^+   + R_{m}^{\script{T}}(t), \,\forall\,\script{T}\in\Omega_B. \nonumber 
\end{eqnarray}
Here $m(\script{T})$ is the next hop node corresponding to the type $\script{T}$ traffic, i.e., $m(\script{T})=m_u$ for $\script{T}\in\{\tsf{UU}, \tsf{LU }\}$ and $m(\script{T})=m_l$ otherwise. 
$R_{m}^{\script{T}}(t)$ is the \emph{aggregate} arrivals to $Q_{m}^{\script{T}}(t)$, given by: 
\begin{eqnarray}
\hspace{-.5in}&&R_{m}^{\tsf{UU}}(t) = X_{m}(t)R_{m}^{\tsf{U}}(t),  \,\,
R_{m}^{\tsf{UL}}(t) =(1-X_{m}(t))R_{m}^{\tsf{U}}(t),\\
\hspace{-.5in}&&R_{m}^{\tsf{LU}}(t) = Y_{m}(t)R_{m}^{\tsf{L}}(t), \,\,\,
R_{m}^{\tsf{LL}}(t) =(1-Y_{m}(t))R_{m}^{\tsf{L}}(t), 
\end{eqnarray}
where $R_{m}^{\tsf{U}}(t)$ and $R_{m}^{\tsf{L}}(t)$ are the aggregate upper and lower division arrivals to node $m$, given by: 
\begin{eqnarray}
\hspace{-.6in}&&R_{m}^{\tsf{U}}(t) = \sum_{m'\in\script{M}_{m}^u}\mu^{\textsf{UU}}_{m',m}(t)  +  \sum_{m'\in\script{M}_{m}^l} \mu^{\textsf{UL}}_{m', m}(t),  \label{eq:income-rate-split-1}\\
\hspace{-.6in}&&R_{m}^{\tsf{L}}(t) =\sum_{m'\in\script{M}_{m}^u}\mu^{\textsf{LU}}_{m', m}(t)  +  \sum_{m'\in\script{M}_{m}^l} \mu^{\textsf{LL}}_{m', m}(t). \label{eq:income-rate-split-2}
\end{eqnarray}
The variables $X_m(t)$ and $Y_m(t)$ are i.i.d. Bernoulli variables taking values $0$ or $1$ with equal probabilities,  introduced for ensuring an equal division of the flow rates. 
Note that we have used inequality in (\ref{eq:queue-bernes-net}). This is because the actual packet arrivals to $Q_m^{\script{T}}(t)$ may be less than $R_{m}^{\script{T}}(t)$ as the upstream nodes may not have enough packets to fulfill the allocated transmission rates.  
%
Our queueing structure and traffic splitting scheme are demonstrated in Fig. \ref{fig:queue-demo}. 
\begin{figure}[cht]
  \centering
  \vspace{-.1in}
\includegraphics[height=1.4in, width=3.2in]{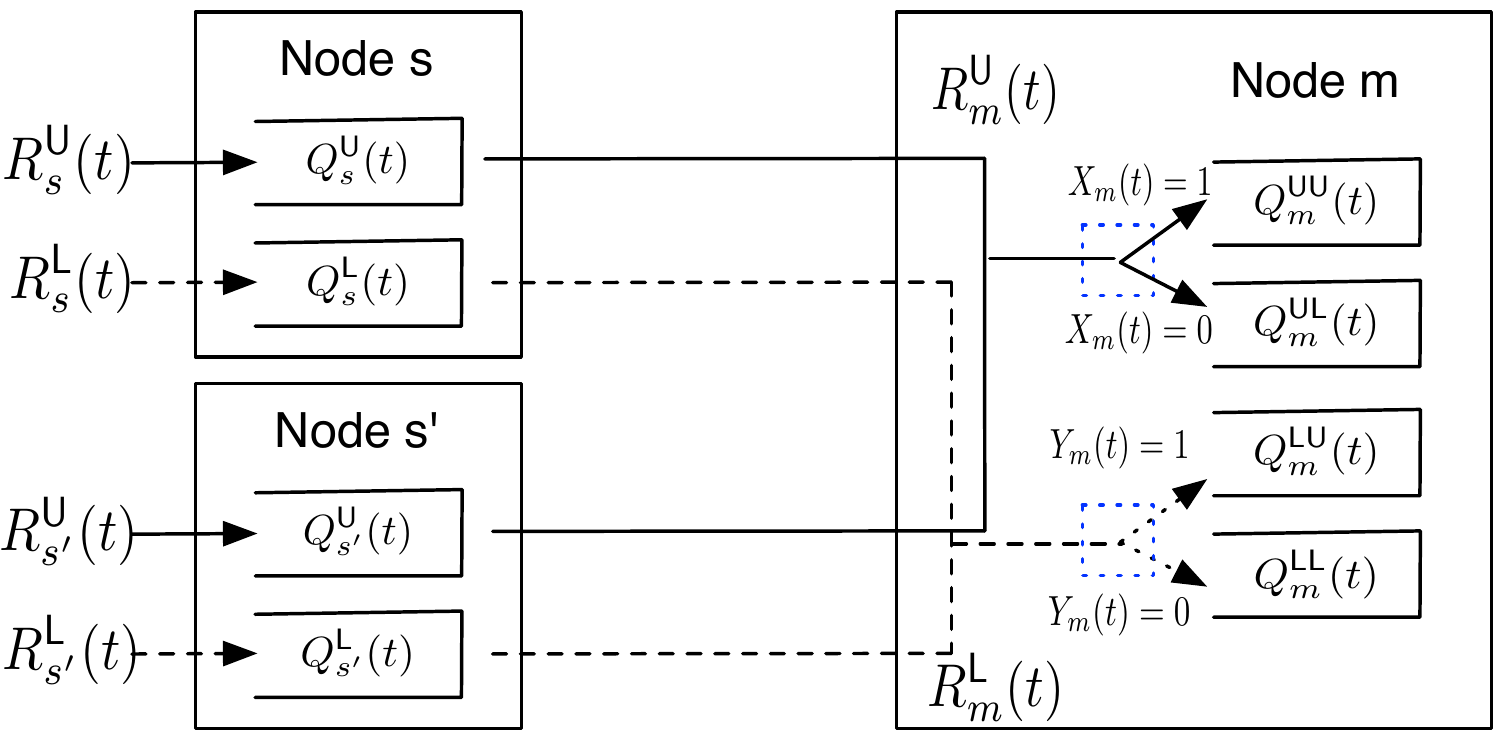}
  \vspace{-.1in}
  \caption{The  queueing structure and traffic splitting method.}
  \label{fig:queue-demo}
    \vspace{-.1in}
\end{figure}

\subsubsection{Partition nodes in the fictitious system} Each node $m\in\script{C}_n$ maintains only two queues $Q_m^{D_1}(t)$ and $Q_m^{D_2}(t)$ with the following dynamics: 
\begin{eqnarray}
\hspace{-.3in}&&Q_{m}^{D_i}(t+1) \leq \big[Q_{m}^{D_i}(t) - \mu_{m, D_i}(t)  \big]^+   +  
R_{m}^{D_i}(t). \label{eq:queue-partition-1}
\end{eqnarray}
Here $R_{m}^{D_1}(t)=R_m^{\tsf{U}}(t)$ and $R_{m}^{D_2}(t)=R_m^{\tsf{L}}(t)$ are the aggregate arrivals defined in (\ref{eq:income-rate-split-1}) and  (\ref{eq:income-rate-split-2}).

\subsubsection{Nodes in columns $n$ to $2n-1$ in the physical system} Each node $m\in\cup_{j=n}^{2n-1}\script{C}_j$ maintains two First-In-First-Out (FIFO) queues $Q^a_{m}(t)$ and $Q^b_{m}(t)$, one for the upper output link $a$ and the other for the lower output link $b$ (see Fig. \ref{fig:benes-2x2}). The arrivals are placed into the queues according to their destinations, i.e.,
\begin{eqnarray}
\hspace{-.5in}&&Q^a_{m}(t+1) = \big[  Q^a_{m}(t) - \mu_{m, m_u}(t)  \big]^+ + \sum_{m'\in\script{M}_{m}}\mu^{a}_{m', m}(t),  \label{eq:queue-dyn-phy-1}\\
 \hspace{-.5in}&&Q^b_{m}(t+1) = \big[  Q^b_{m}(t) - \mu_{m, m_l}(t) \big]^+ + \sum_{m'\in\script{M}_{m}}\mu^{b}_{m', m}(t).  \label{eq:queue-dyn-phy-2}
\end{eqnarray}
Here $\mu^{a}_{m', m}(t)=\sum_{s}\sum_{d\in\script{O}_{m}^a}\tilde{\mu}_{m',m}^{sd}(t)$, where $\tilde{\mu}_{m',m}^{sd}(t)$ denotes the \emph{actual} number of flow $(s, d)$ packets sent from $m'$ to $m$ at time $t$, and $\mu^{b}_{m', m}(t)=\sum_{s}\sum_{d\in\script{O}_{m}^b}\tilde{\mu}_{m',m}^{sd}(t)$ denotes the number of packets that need to traverse the lower outgoing link $b$ to their destinations.  

Notice that in both systems, each partition node only maintains two queues and does not further split the traffic. This is because in the fictitious system, the next hop nodes of a partition node are $D_1$ and $D_2$, whereas in the physical system, the flow $(s, d)$ packets at the partition nodes will be delivered  to output server $d$ following a unique path according to Lemma \ref{lemma:unique-path}. 

We now show that under the special structure of the Benes network,  our queueing structure and traffic splitting scheme generate a balanced routing across the network. This is summarized in the following lemma, where, for $\script{T} \in \Omega_B$, we use $\overline{\mu}_{m, m(\script{T})}^{\script{T}}$ to denote the 
time average transmission rates of the type $\script{T}$ traffic from $m$ to $m(\script{T})$. 
Specifically, 
\begin{eqnarray*}
\hspace{-.3in}&&\overline{\mu}_{m, m(\script{T})}^{\script{T}} \triangleq\lim_{T\rightarrow\infty}\frac{1}{T}\sum_{t=0}^{T-1}\expectm{\tilde{\mu}_{m, m(\script{T})}^{\script{T}}(t)}. 
\end{eqnarray*} 
Here $\tilde{\mu}_{m, m(\script{T})}^{\script{T}}(t)$ denotes the \emph{actual} number of type $\script{T}$ packets sent over the link $[m, m(\script{T})]$ at time $t$. 
Similarly, we use $\overline{\mu}^{sd}_{m}$ to denote the average rate of the flow $(s, d)$ packets going through a node $m$, i.e., 
\begin{eqnarray*}
\overline{\mu}^{sd}_{m}\triangleq\lim_{T\rightarrow\infty}\frac{1}{T}\sum_{t=0}^{T-1}\expectm{\tilde{\mu}_{m, m_u}^{sd}(t)+\tilde{\mu}_{m, m_l}^{sd}(t)}, 
\end{eqnarray*}
where $\tilde{\mu}_{m, m_u}^{sd}(t)$ and $\tilde{\mu}_{m, m_l}^{sd}(t)$ denote the actual numbers of flow $(s, d)$ packets sent from node $m$ to node $m_u$ and node $m_l$ at time $t$, respectively. 
\begin{lemma}\label{lemma:balanced-load}
 If the fictitious network is stable, then, 
\begin{itemize}
\item[(a)] For every node $m\in\cup_{j=1}^{n-1}\script{C}_j$,  
\begin{eqnarray}
\overline{\mu}_{m, m_u}^{\textsf{UU}} = \overline{\mu}_{m, m_l}^{\textsf{UL}},
\overline{\mu}_{m, m_u}^{\textsf{LU}} = \overline{\mu}_{m, m_l}^{\textsf{LL}}. \label{eq:per-node-rate}
\end{eqnarray}
\item[(b)] The average rate of any $(s, d)$ flow packets going through any partition node $m\in\script{C}_n$ satisfies $\overline{\mu}^{sd}_{m}=r_{sd}/2^{n-1}$. $\Diamond$
\end{itemize}
\end{lemma}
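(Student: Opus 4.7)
The plan is to prove (a) via a flow-conservation argument at the queues of the fictitious network, and (b) by an induction on the column index that tracks the rate of each individual $(s,d)$ flow through the first half of the network. I argue the upper-division case throughout; the lower-division case is symmetric, using $Y_m(t)$ in place of $X_m(t)$.

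For part (a), the observation is that since $Q_m^{\tsf{UU}}(t)$ and $Q_m^{\tsf{UL}}(t)$ are stable in the fictitious network, the time-average arrival rate into each queue equals its time-average departure rate. The arrivals to $Q_m^{\tsf{UU}}(t)$ at time $t$ are $X_m(t)R_m^{\tsf{U}}(t)$, and since $X_m(t)$ is an exogenous i.i.d.\ Bernoulli$(1/2)$ variable independent of the history, its time-average value equals $\tfrac{1}{2}\lim_T \tfrac{1}{T}\sum_t \expectm{R_m^{\tsf{U}}(t)}$. Replacing $X_m(t)$ by $1-X_m(t)$ produces exactly the same quantity as the time-average arrival rate to $Q_m^{\tsf{UL}}(t)$. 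Because the only way packets leave $Q_m^{\tsf{UU}}$ and $Q_m^{\tsf{UL}}$ is via the links $[m,m_u]$ and $[m,m_l]$ respectively, stability then gives $\overline{\mu}_{m,m_u}^{\tsf{UU}}=\overline{\mu}_{m,m_l}^{\tsf{UL}}$. The same reasoning applied to the lower-division queues, using $Y_m(t)$, produces the second equality in (\ref{eq:per-node-rate}).

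For part (b), I would prove by induction on $j\in\{1,\dots,n\}$ the stronger statement: for every upper-division $(s,d)$ flow and every node $m\in\script{C}_j$ reachable from $m(s)$, the time-average rate of $(s,d)$-packets through $m$ is exactly $r_{sd}/2^{j-1}$. The base case $j=1$ is immediate because all $(s,d)$-arrivals enter the single node $m(s)$ at rate $r_{sd}$. For the induction step, fix such an $m\in\script{C}_j$ with $(s,d)$-rate $r_{sd}/2^{j-1}$, and note that $X_m(t)$ is applied to the \emph{entire} upper-division arrival $R_m^{\tsf{U}}(t)$, hence to the $(s,d)$-sub-stream within it. By the independence of $X_m(t)$ from these arrivals, the time-average $(s,d)$-rate entering $Q_m^{\tsf{UU}}$ equals $r_{sd}/2^{j}$, and the same holds for $Q_m^{\tsf{UL}}$. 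Stability of these queues, together with the fact that every admitted packet eventually departs (modulo a bounded residue), then transfers these rates to the outgoing links $[m,m_u]$ and $[m,m_l]$; since an $(s,d)$-packet at $m_u$ coming from $m$ must have passed through $Q_m^{\tsf{UU}}$ (and similarly for $m_l$ via $Q_m^{\tsf{UL}}$), the $(s,d)$-rates out of $m$ into $m_u$ and $m_l$ are each $r_{sd}/2^j$.

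To finish the inductive step, I need that each reachable node in $\script{C}_{j+1}$ has a \emph{unique} predecessor in the reachable set within $\script{C}_j$, so that the $(s,d)$-rate does not accumulate from multiple incoming paths. I expect this to be the main obstacle, and would handle it with a short structural claim: from any input server $s$, the $2^{j-1}$ sequences of upper/lower choices along columns $1,\dots,j$ lead to $2^{j-1}$ \emph{distinct} nodes of $\script{C}_j$. This is a standard Benes property that follows by induction on $n$ from Section~\ref{section:benes-connect} — after one hop, the flow enters either the upper or lower $2^{n-1}\times 2^{n-1}$ subnetwork, at a unique column-$1$ node of that subnetwork, and the claim applies recursively. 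Equivalently, it is the input-side mirror of Lemma~\ref{lemma:unique-path}: the path from any input server to any partition node is unique, so all $2^{n-1}$ partition nodes are reached from $s$ via $2^{n-1}$ distinct paths. Setting $j=n$ in the induction then yields $\overline{\mu}^{sd}_m=r_{sd}/2^{n-1}$ at every partition node, completing (b).
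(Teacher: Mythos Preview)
Your argument for part (a) is essentially the paper's. For part (b) your proof is correct but organized differently: you induct on the column index $j$, showing that the $(s,d)$-rate at each node of $\script{C}_j$ reachable from $s$ is $r_{sd}/2^{j-1}$, and you isolate as a separate structural claim that the $2^{j-1}$ up/down paths from $s$ into $\script{C}_j$ hit distinct nodes (the input-side analogue of Lemma~\ref{lemma:unique-path}). The paper instead inducts on the network size $n$: it checks the $4\times 4$ case directly, then for $\mathbb{B}_n$ uses part (a) at the column-$1$ modules to see that each flow splits in half between the upper and lower $\mathbb{B}_{n-1}$ subnetworks, applies the induction hypothesis inside each subnetwork, and invokes Fact~\ref{fact:partition-node-coincide} to identify the subnetworks' partition nodes with those of $\mathbb{B}_n$. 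Your column-wise induction makes the fan-out tree explicit and tracks per-flow rates directly, at the price of proving the path-distinctness claim separately; the paper's induction on $n$ is terser because that structural fact is absorbed into one appeal to the recursive construction and Fact~\ref{fact:partition-node-coincide}.
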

\begin{proof}
See Appendix A. 
\end{proof}

\vspace{-.1in}
\subsection{The arrival admission queue}
Since the arrivals to the network are dynamic, in order to perform packet admission in a fair manner, we  introduce an auxiliary variables $\gamma_{sd}(t)$ and create the following virtual \emph{admission queue} for every flow $(s, d)$: 
\begin{eqnarray}
H_{sd}(t+1) = \big[H_{sd}(t) -  R_{sd}(t) \big]^+ +\gamma_{sd}(t). 
\end{eqnarray}

Intuitively, $\gamma_{sd}(t)$ indicates how many flow $(s, d)$ packets should have been admitted into the network. However, due to the randomness of the arrivals, this may not be feasible at every time $t$. Hence, the admission queue $H_{sd}(t)$ is created to ensure that in the long run, the admitted packets have a rate that is no smaller than the rate they should have got.  

\vspace{-.1in}
\subsection{The output regulation queue}
Here we specify the last component needed for our algorithm. Note that the above subsections have been dealing with reducing the number of queues per node and balancing the traffic inside the Benes network. In order to guarantee stability of the network, one also needs to ensure that the total traffic going to any output port of the Benes network does not exceed its capacity. To do so, we create the following \emph{regulation queue} for each output port $d\in\{1, ..., 2^n\}$ (or equivalently,  output server $d$): 
\begin{eqnarray}
q_d(t+1) = \big[  q_d(t)   - (1-\eta) \big]^++ \sum_{s}R_{sd}(t).   
\end{eqnarray}
That is, the input to this queue are all the admitted packets destined for output port $d$, and the service rate of the queue is $1-\eta$ for some small $\eta>0$ for all time. 
The intuition here is that if these virtual queues are  stable, then the average traffic rate for any output port is no more than $1-\eta$. The reason we have a small $\eta$ ``slack'' is to ensure queue stability for the nodes in columns $n$ to $2n-1$ in the physical network. 

\vspace{-.1in}
\section{The Grouped-Backpressure algorithm (\tsf{G-BP})}\label{section:gbp}
In this section, we present the construction of the \tsf{G-BP} algorithm and its performance. 

\vspace{-.1in}
\subsection{Constructing \tsf{G-BP}} 
For notation purposes, we first define the aggregate network queue vector of the fictitious network as follows: 
\begin{eqnarray*}
\hspace{-.3in}&&\bv{Z}(t)=\big(Q_s^{\script{T}}(t), \,\forall\,s, \,\script{T}\in\Omega_s,\, Q_{m}^{\script{T}}(t),  \forall\,m\in\cup_{j=1}^{n-1}\script{C}_j,\,\script{T}\in\Omega_B, \\
\hspace{-.3in}&&\qquad Q_{m}^{D_1}(t), Q_{m}^{D_2}(t), \,\forall\, m\in\script{C}_n,\, H_{sd}(t), \,\forall\,(s, d),  \, q_d(t), \forall\,d  \big). \nonumber
\end{eqnarray*}
%
Then,  we define the following Lyapunov function: 
\begin{eqnarray}
\hspace{-.45in}&&L(t) \triangleq \frac{1}{2} \sum_{s, \script{T}\in\Omega_s}[Q_{s}^{\script{T}}(t)]^2+ \frac{1}{2} \sum_{m\in\cup_{j=1}^{n-1}\script{C}_j}\sum_{\script{T}\in\Omega_B}[Q_{m}^{\script{T}}(t)]^2 \\
\hspace{-.45in}&&\quad +\frac{1}{2}\sum_{m\in\script{C}_n}\sum_{i=1,2}[Q_{m}^{D_i}(t)]^2+ \frac{1}{2}\sum_{s,d}[H_{sd}(t)]^2 + \frac{1}{2}\sum_d[q_d(t)]^2. \nonumber
\end{eqnarray}
Now define a Lyapunov drift as follows: 
\begin{eqnarray}
\Delta(t)  \triangleq \expect{L(t+1) - L(t)\left.|\right.\bv{Z}(t)}. 
\end{eqnarray}
Using the facts that $0\leq A_{sd}(t)\leq A_{\max}$ and that all the link capacities in the network are bounded, we get the following lemma for the drift. In the lemma, the parameter $V\geq1$ is a control parameter offered by the algorithm to control the flow utility performance.
\begin{lemma}\label{lemma:drift-ineq}
Under any control policy, the following property holds for the drift at any time $t$: 
\begin{eqnarray}
\hspace{-.3in}&&\Delta(t) - V\expect{\sum_{s, d} U_{sd}(\gamma_{sd}(t))\left.|\right.\bv{Z}(t) }\label{eq:drift-utility}\\
\hspace{-.3in}&&\leq B  - \sum_{d}q_d(t)(1-\eta)-\sum_{m\in\script{C}_n, i}Q_{m}^{D_i}(t)\expect{   \mu_{m, D_i}(t)   \left.|\right.\bv{Z}(t)}\nonumber\\
\hspace{-.3in}&&\qquad -\sum_{s, d} \expect{VU_{sd}(\gamma_{sd}(t)) - H_{sd}(t) \gamma_{sd}(t)\left.|\right.\bv{Z}(t) }\nonumber\\
\hspace{-.3in}&&\qquad - \sum_s\sum_{d\leq2^{n-1}}\expect{ R_{sd}(t)\big[ H_{sd}(t) -  q_d(t) -Q_{s}^{\tsf{U}}(t)  \big]     \left.|\right.\bv{Z}(t) }\nonumber\\
\hspace{-.3in}&&\qquad - \sum_s\sum_{d>2^{n-1}}\expect{ R_{sd}(t)\big[ H_{sd}(t) -  q_d(t) -Q_{s}^{\tsf{L}}(t)  \big]     \left.|\right.\bv{Z}(t) }\nonumber\\
\hspace{-.3in}&& - \sum_s\expect{\mu_{s, m(s)}^{\tsf{U}}(t)  \big[ Q_s^{\tsf{U}}(t)  -\frac{1}{2}Q^{\textsf{UU}}_{m(s)}(t) - \frac{1}{2}Q^{\textsf{UL}}_{m(s)}(t) \big] \left.|\right.\bv{Z}(t)  } \nonumber\\
\hspace{-.3in}&& - \sum_s\expect{\mu_{s, m(s)}^{\tsf{L}}(t)  \big[ Q_s^{\tsf{L}}(t)  -\frac{1}{2}Q^{\textsf{LU}}_{m(s)}(t) - \frac{1}{2}Q^{\textsf{LL}}_{m(s)}(t) \big] \left.|\right.\bv{Z}(t)  } \nonumber\\
\hspace{-.3in}&&- \sum_{m\in\script{C}_{n-1}}\sum_{\script{T}\in\{\tsf{UU, UL}\}}\expect{  \mu^{\script{T}}_{m, m(\script{T})}(t)  \big[    Q_m^{\script{T}}(t) -  Q^{D_1}_{m(\script{T})}(t) 
\big]    \left.|\right.\bv{Z}(t)  }\nonumber\\
\hspace{-.3in}&& - \sum_{m\in\script{C}_{n-1}}\sum_{\script{T}\in\{\tsf{LU, LL}\}}\expect{  \mu^{\script{T}}_{m, m(\script{T})}(t)  \big[    Q_m^{\script{T}}(t) - Q^{D_2}_{m(\script{T})}(t) \big]    \left.|\right.\bv{Z}(t)  }\nonumber\\
\hspace{-.3in}&& - \sum_{m\in\cup_{j=1}^{n-2}\script{C}_{j}}\sum_{\script{T}\in\{\tsf{UU, UL}\}}\expect{  \mu^{\script{T}}_{m, m(\script{T})}(t)  \big[    Q_m^{\script{T}}(t) -  \frac{1}{2}Q^{\textsf{UU}}_{m(\script{T})}(t) \nonumber\\
\hspace{-.3in}&&\qquad\qquad\qquad\qquad\qquad\qquad\qquad\qquad - \frac{1}{2}Q^{\textsf{UL}}_{m(\script{T})}(t) 
\big]    \left.|\right.\bv{Z}(t)  }\nonumber\\
\hspace{-.3in}&& - \sum_{m\in\cup_{j=1}^{n-2}\script{C}_{j}}\sum_{\script{T}\in\{\tsf{LU, LL}\}}\expect{  \mu^{\script{T}}_{m, m(\script{T})}(t)  \big[    Q_m^{\script{T}}(t) -  \frac{1}{2}Q^{\textsf{LU}}_{m(\script{T})}(t) \nonumber\\
\hspace{-.3in}&&\qquad\qquad\qquad\qquad\qquad\qquad\qquad\qquad - \frac{1}{2}Q^{\textsf{LL}}_{m(\script{T})}(t) 
\big]    \left.|\right.\bv{Z}(t)  }.\nonumber
\end{eqnarray}
Here $B$ is a constant given by: 
\begin{eqnarray}
B=\frac{1}{2}[2^{n}(10n-2) +A_{\max}^2(2^{3n-1}+2^{2n+1}+2^{3n})],  \label{eq:B-def}
\end{eqnarray}
and the expectation is taken over the random arrivals as well as the potential randomness in the actions. $\Diamond$
\end{lemma}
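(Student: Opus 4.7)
The plan is to carry out a standard Lyapunov drift computation, queue by queue, and then regroup the resulting cross terms by backpressure ``edges''. For every queue recursion of the form $Q(t+1)\leq [Q(t)-\mu(t)]^++R(t)$ (with $R(t)$ possibly random through the Bernoulli splitters), I apply the elementary inequality
\begin{eqnarray*}
Q(t+1)^2 \leq Q(t)^2 + \mu(t)^2 + R(t)^2 - 2Q(t)(\mu(t)-R(t))
\end{eqnarray*}
and take conditional expectation given $\bv{Z}(t)$. Doing this for (i) the input-server queues $Q_s^{\tsf{U}}, Q_s^{\tsf{L}}$ in (\ref{eq:queue-source-q}), (ii) the switch-module queues $Q_m^{\script{T}}$ with $\script{T}\in\Omega_B$ for $m\in\cup_{j=1}^{n-1}\script{C}_j$ in (\ref{eq:queue-bernes-net}), (iii) the partition-node queues $Q_m^{D_i}$ in (\ref{eq:queue-partition-1}), (iv) the admission queues $H_{sd}$, and (v) the regulation queues $q_d$, and summing, gives an upper bound of the form $\Delta(t)\leq B'+\text{(linear cross terms)}$, where $B'$ collects all the $\mathbb{E}[\mu^2]$ and $\mathbb{E}[R^2]$ contributions. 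Because every link carries at most one packet per slot and $A_{sd}(t)\leq A_{\max}$, each of these squared terms is bounded by a constant, and the aggregation $B$ in (\ref{eq:B-def}) is obtained simply by counting the number of queues of each type (on the order of $2^n n$ modules and $2^n$ servers) and using $A_{\max}^2$ for the arrival contributions.

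The substantive step is reorganizing the linear cross terms. For each link $[m',m]$ that carries type-$\script{T}$ traffic, the variable $\mu^{\script{T}}_{m',m}(t)$ appears with coefficient $+Q_{m'}^{\script{T}}(t)$ from the upstream queue's $-2Q(\mu-R)$ term, and with a negative contribution from the downstream queue through the $R^{\script{T}}_m(t)$ expressions (\ref{eq:income-rate-split-1})--(\ref{eq:income-rate-split-2}). The key manipulation is that at a switch module $m\in\cup_{j=1}^{n-2}\script{C}_j$, the upper-division arrival $R_m^{\tsf{U}}(t)$ is split via the Bernoulli variable $X_m(t)$ into the two queues $Q_m^{\tsf{UU}}$ and $Q_m^{\tsf{UL}}$. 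Because $X_m(t)$ is independent of $\bv{Z}(t)$ and has mean $1/2$,
\begin{eqnarray*}
\mathbb{E}\bigl[ Q_m^{\tsf{UU}}(t)R_m^{\tsf{UU}}(t) + Q_m^{\tsf{UL}}(t)R_m^{\tsf{UL}}(t) \,\big|\,\bv{Z}(t)\bigr]
= \tfrac{1}{2}\bigl(Q_m^{\tsf{UU}}(t)+Q_m^{\tsf{UL}}(t)\bigr)\mathbb{E}[R_m^{\tsf{U}}(t)\,|\,\bv{Z}(t)],
\end{eqnarray*}
which after substituting (\ref{eq:income-rate-split-1}) into $R_m^{\tsf{U}}$ produces exactly the $-\tfrac{1}{2}Q^{\tsf{UU}}_{m(\script{T})}(t)-\tfrac{1}{2}Q^{\tsf{UL}}_{m(\script{T})}(t)$ coefficient attached to every $\mu^{\script{T}}_{m,m(\script{T})}(t)$ with $\script{T}\in\{\tsf{UU},\tsf{UL}\}$ in the stated bound. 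The analogous computation gives the lower-division term. At column $n-1$, the downstream queue is the partition queue $Q^{D_1}_{m(\script{T})}$ (respectively $Q^{D_2}_{m(\script{T})}$), which is not split, so no $1/2$ appears; at the input servers, the same $1/2$-splitting with $X_{m(s)},Y_{m(s)}$ yields the $-\tfrac{1}{2}Q^{\tsf{UU}}_{m(s)}-\tfrac{1}{2}Q^{\tsf{UL}}_{m(s)}$ structure on the $\mu_{s,m(s)}^{\tsf{U}}$ coefficient.

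For the admission and regulation queues I use their explicit recursions: squaring $H_{sd}$ produces $+H_{sd}(t)\gamma_{sd}(t) - H_{sd}(t)R_{sd}(t)$, and squaring $q_d$ produces $-q_d(t)(1-\eta) + q_d(t)\sum_s R_{sd}(t)$. Grouping all the $R_{sd}(t)$ terms from $H_{sd}$, $q_d$, and whichever input-server queue ($Q_s^{\tsf{U}}$ or $Q_s^{\tsf{L}}$) the flow is routed into according to (\ref{eq:input-server-rate}) yields the two $R_{sd}$ lines in the statement, split by whether $d\leq 2^{n-1}$ or $d>2^{n-1}$. Finally, subtracting $V\mathbb{E}[\sum_{sd}U_{sd}(\gamma_{sd}(t))\,|\,\bv{Z}(t)]$ from both sides and pairing the $VU_{sd}(\gamma_{sd}(t))$ term with the $H_{sd}(t)\gamma_{sd}(t)$ term produces the stated utility-auxiliary line.

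The main obstacle is not any individual estimate but the bookkeeping: every rate variable $\mu^{\script{T}}_{m,m(\script{T})}(t)$ must appear with the correct upstream queue and the correct (possibly $1/2$-weighted) downstream queue, and the boundary columns (input servers, columns $1$ to $n-2$, column $n-1$, and the partition column $n$) must be treated separately because the downstream queue type changes across these boundaries. Care must also be taken to keep inequality $\leq$ (rather than equality) in the cross terms involving $Q_m^{\script{T}}$ for $m$ in the interior of the Benes network, since (\ref{eq:queue-bernes-net}) is an inequality—this only strengthens the direction $\Delta(t)\leq\cdots$. Once these cases are organized correctly, the constant $B$ in (\ref{eq:B-def}) falls out from counting all queues of each type and bounding the squared arrivals and services.
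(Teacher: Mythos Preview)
Your proposal is correct and follows essentially the same approach as the paper: square each queue recursion via $([x]^+)^2\le x^2$, sum over the five queue families, take conditional expectation, and then regroup the linear cross terms link by link after subtracting the $V\sum U_{sd}(\gamma_{sd})$ term. Your explicit computation of the Bernoulli-splitter step, showing how the independence and mean $1/2$ of $X_m(t),Y_m(t)$ turn $\expectm{Q_m^{\tsf{UU}}R_m^{\tsf{UU}}+Q_m^{\tsf{UL}}R_m^{\tsf{UL}}\mid\bv{Z}(t)}$ into $\tfrac12(Q_m^{\tsf{UU}}+Q_m^{\tsf{UL}})\expectm{R_m^{\tsf{U}}\mid\bv{Z}(t)}$, is exactly the mechanism the paper invokes only implicitly when it says ``rearranging the terms, and using the definitions of $R_s^{\script{T}}$, $R_m^{\script{T}}$ and $R_m^{D_i}$.''
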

\begin{proof}
See Appendix B. 
\end{proof}

Note that since the Benes network size is $\Theta(2^n)$, the $n$ value is only logarithmic in the network size. Hence, $B$ is indeed only polynomial in the network size. 
Based on the above lemma, we now describe our algorithm for the \emph{physical} system. In the algorithm, we will operate the nodes in $\script{S}$ and $\cup_{j=1}^{n-1}\script{C}_j$ in the physical system exactly as we operate them in the fictitious system. 
For these nodes, the actions will be chosen in every time slot to minimize the right-hand-side (RHS) of the drift expression (\ref{eq:drift-utility}). 
For all the modules in columns $n$ to $2n-1$, we simply do a free-flow routing.  
%
%

\underline{\textsf{Grouped-Backpressure (G-BP)}}
At every time slot $t$, observe $\bv{A}(t)$ and $\bv{Z}(t)$, and perform the following: 
\begin{itemize}
\item \underline{\textsf{Auxiliary  Variable Selection:}} For every $(s, d)$ flow, choose  $\gamma_{sd}(t)$ to solve: 
\begin{eqnarray}
\max: &&   VU_{sd}(\gamma_{sd}(t)) - H_{sd}(t)\gamma_{sd}(t) \label{eq:gbp-gamma}\\
\text{s.t.}  && 0\leq \gamma_{sd}(t)\leq A_{\max}. \nonumber
\end{eqnarray}

\item \underline{\textsf{Admission Control:}} For every input server $s$: If $d\leq2^{n-1}$, choose $R_{sd}(t)=A_{sd}(t)$ if $H_{sd}(t)    -  q_d(t) -  Q_{s}^{\textsf{U}}(t)  >0$; else  choose $R_{sd}(t)=0$. If $d>2^{n-1}$, choose $R_{sd}(t)=A_{sd}(t)$ if $H_{sd}(t)    -  q_d(t) -  Q_{s}^{\textsf{L}}(t)  >0$; else  choose $R_{sd}(t)=0$.

\item \underline{\textsf{Routing and Scheduling:}} 
\begin{itemize}
\item \underline{For any node $m\in\cup_{j=1}^{n-1}\script{C}_j\cup\script{S}$:}  define the following weights for the outgoing link $[m, m_u]$: 
\begin{eqnarray}
\hspace{-.3in}&&W^{\tsf{U}}_{m, m_u}(t)\triangleq \max\bigg[   Q^{\textsf{UU}}_{m}(t) - \tilde{Q}_{m_u}^{\tsf{U}}(t), 0 \bigg], \label{eq:weight-1-up}\\
\hspace{-.3in}&&W^{\tsf{L}}_{m, m_u}(t)\triangleq \max\bigg[   Q^{\textsf{LU}}_{m}(t) -\tilde{Q}_{m_u}^{\tsf{L}}(t), 0 \bigg],  \label{eq:weight-2-up}
\end{eqnarray}
where $\tilde{Q}_{m_u}^{\tsf{U}}(t)$ and $\tilde{Q}_{m_u}^{\tsf{L}}(t)$ are defined as: 
\begin{eqnarray}
\hspace{-.3in}\tilde{Q}_{m_u}^{\tsf{U}}(t) = \left\{\begin{array}{cc}
\frac{1}{2}Q^{\textsf{UU}}_{m_u}(t) + \frac{1}{2}Q^{\textsf{UL}}_{m_u}(t) & j_m\leq n-2,\\
Q^{D_1}_{m_u}(t) & j_m=n-1, 
\end{array}\right.\label{eq:sum-queue-1}\\
\hspace{-.3in}\tilde{Q}_{m_u}^{\tsf{L}}(t) = \left\{\begin{array}{cc}
\frac{1}{2}Q^{\textsf{LU}}_{m_u}(t) + \frac{1}{2}Q^{\textsf{LL}}_{m_u}(t) & j_m\leq n-2,\\
Q^{D_2}_{m_u}(t) & j_m=n-1. 
\end{array}\right.\label{eq:sum-queue-2}
\end{eqnarray}
Then, we choose the service rates $\mu_{m, m_{u}}^{\textsf{UU}}(t)$ and $\mu_{m, m_{u}}^{\textsf{LU}}(t)$ for link $[m, m_u]$ to solve: 
\begin{eqnarray}
\hspace{-.45in}&&\max: \quad\mu_{m, m_{u}}^{\textsf{UU}}(t)W_{m, m_{u}}^{\tsf{U}} + \mu_{m, m_{u}}^{\textsf{LU}}(t)W_{m, m_{u}}^{\tsf{L}} \label{eq:gbp-routing}\\
\hspace{-.45in}&&\quad\text{s.t.} \quad\,\,\,\mu_{m, m_{u}}^{\textsf{UU}} + \mu_{m, m_{u}}^{\textsf{LU}}\leq 1,\,\, \mu_{m, m_{u}}^{\textsf{UU}}, \mu_{m, m_{u}}^{\textsf{LU}}\in\{0, 1\}. \nonumber
\end{eqnarray}
To solve for $\mu_{m, m_{l}}^{\textsf{UL}}(t)$ and $\mu_{m, m_{l}}^{\textsf{LL}}(t)$, we replace $Q_m^{\tsf{UU}}(t)$ and $Q_m^{\tsf{LU}}(t)$ with $Q_m^{\tsf{UL}}(t)$  and $Q_m^{\tsf{LL}}(t)$ in (\ref{eq:weight-1-up}) and  (\ref{eq:weight-2-up}). Also, we replace $m_u$ and $D_1$ with $m_l$ and $D_2$ in (\ref{eq:sum-queue-1}) and (\ref{eq:sum-queue-2}). If  $m=s\in\script{S}$, we simply replace $Q_m^{\tsf{UU}}(t)$ and $Q_m^{\tsf{UL}}(t)$ with $Q_s^{\tsf{U}}(t)$  and $Q_s^{\tsf{L}}(t)$ in (\ref{eq:weight-1-up}) and  (\ref{eq:weight-2-up}), and replace $m_u$ by $m(s)$ in (\ref{eq:sum-queue-1}) and (\ref{eq:sum-queue-2}). 


\item \underline{For every node  $m\in\cup_{j=n}^{2n-1}\script{C}_j$:}  Each module serves each FIFO queue for each outgoing link according to (\ref{eq:queue-dyn-phy-1}) and (\ref{eq:queue-dyn-phy-2}) with $\mu_{m, m_u}(t)=\mu_{m, m_l}(t)=1$ for all time. 

\end{itemize}

\item \underline{\tsf{Queue Updates:}} In the fictitious system, choose the service rates $\mu_{m, D_1}(t)$ and $\mu_{m, D_2}(t)$ to solve: 
\begin{eqnarray}
\max: && Q_{m}^{D_1}(t)  \mu_{m, D_1}(t) + Q_{m}^{D_2}(t)  \mu_{m, D_2}(t) \label{eq:gbp-routing-partition}\\
\text{s.t.} &&  \mu_{m, D_1}(t),  \mu_{m, D_2}(t)\in\{0, 1\}. \nonumber
\end{eqnarray}
Then, update all the queues in both the fictitious system and the physical system according to their dynamics. $\Diamond$
\end{itemize}
We note that \textsf{G-BP} only controls the first half of the  physical system with the backpressure actions. 
All the nodes in columns $n$ to $2n-1$ simply serve the flows with a ``free-flow'' manner, i.e., always serve the flows at the maximum rate. 
This is different from the usual backpressure algorithms that control all the queues  in the network to ensure stability. 
%

\vspace{-.1in}
\subsection{Performance analysis}\label{section:analysis}
In this section, we prove that \textsf{G-BP} achieves a near-optimal performance. To carry out our analysis, we first have the following theorem, which characterizes the capacity region of a Benes network. In the theorem, we use $\bv{r}=(r_{sd}, \,\forall\, (s,d))$ to denote the vector of arrival rates, where $r_{sd}$ represents the average rate of the $(s, d)$ flow. 
\begin{theorem} \cite{benes-net} \cite{benes-net-book}
The capacity region of the Benes network $\mathbb{B}_n$ is given by:
\begin{eqnarray*}
\Lambda_n=\{\bv{r}\left.|\right. \sum_{s=1}^{2^n}r_{sd}\leq1, \,\sum_{d=1}^{2^n}r_{sd}\leq1, \, r_{sd}\geq0, \forall\,s,d\}.  \Diamond
\end{eqnarray*}
\end{theorem}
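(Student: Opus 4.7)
The plan is to prove the capacity region in two directions: necessity of the constraints, then sufficiency via a Birkhoff--von Neumann decomposition combined with the rearrangeable non-blocking property of Benes networks.

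For necessity, I would observe that input server $s$ has a single outgoing link of capacity $1$ packet/slot, so in any stable policy the total long-run departure rate from $s$ equals $\sum_d r_{sd}$ and must be at most $1$. Symmetrically, output server $d$ receives traffic through a single link of capacity $1$, giving $\sum_s r_{sd} \leq 1$. Non-negativity is immediate. This establishes $\Lambda_n$ as a subset of the claimed region.

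For sufficiency, given any $\bv{r}$ satisfying the constraints, I would first extend the matrix $[r_{sd}]$ to a doubly stochastic matrix by adding slack (if the row or column sums are strictly less than $1$, introduce fictitious flows to fill them up; since $2^n\times2^n$ doubly sub-stochastic matrices embed into doubly stochastic ones by appropriate padding). Then, by the Birkhoff--von Neumann theorem, this doubly stochastic matrix decomposes as a convex combination $\sum_k \alpha_k P_k$ of permutation matrices $P_k$, with $\alpha_k \geq 0$ and $\sum_k \alpha_k = 1$. The crucial structural property of Benes networks, stated in the references cited, is that they are \emph{rearrangeably non-blocking} as circuit switches: for any permutation $\pi$ of $\{1,\ldots,2^n\}$, there is a simultaneous assignment of routes that connects input $s$ to output $\pi(s)$ for all $s$ without any two routes sharing a link. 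Thus each $P_k$ can be realized in one time slot.

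The routing/scheduling policy is then to time-share the permutations $\{P_k\}$ in proportions $\{\alpha_k\}$ (e.g., via a periodic schedule or a randomized per-slot selection). Under this schedule, every link $[m,m']$ in the network carries at most $1$ packet/slot on average and each $(s,d)$ flow receives average service rate at least $r_{sd}$, which together with $A_{sd}(t) \leq A_{\max}$ yields stability of a queueing discipline that routes packets along the fixed per-permutation routes. This shows $\bv{r}\in\Lambda_n$.

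I expect the main subtlety to be the ``stability'' step rather than the rate-matching step: one must verify that under the randomized (or periodic) permutation schedule, the per-link queues are actually stable in the sense defined in the paper, not merely that the long-run service rate suffices. This is standard (using a Lyapunov argument or the fact that under a Bernoulli schedule the service process dominates the arrival process in a strong enough sense), but it requires a brief argument rather than citing Birkhoff--von Neumann alone. Since the statement is attributed to the classical Benes references, I would state this step compactly and defer to those references for the detailed routing-algorithm construction.
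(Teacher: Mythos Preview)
Your approach is correct and is precisely the classical argument. Note, however, that the paper does not supply its own proof of this theorem: it is stated with citations to \cite{benes-net} and \cite{benes-net-book} and invoked as a known result. Your necessity argument (single unit-capacity link per input/output server) and your sufficiency argument (pad to doubly stochastic, apply Birkhoff--von Neumann, realize each permutation via the rearrangeable non-blocking property, then time-share) constitute exactly the standard proof one finds in those references, so there is nothing to compare against in the paper itself. Your identification of the stability step as the only place requiring care is also accurate; a one-line Lyapunov or rate-stability argument suffices there, and the paper's own Lemma~\ref{lemma:stable} (Appendix~D) could be invoked for that purpose if you wanted to stay internal to the paper's toolkit.
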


We now present the performance results of the \textsf{G-BP} algorithm. 
Recall that $\beta$ is defined in (\ref{eq:beta-def}) to be the maximum first derivative among all utility functions, and that $\bv{r}^{\textsf{opt}}\in\Lambda_n$ denotes the optimal solution to the flow utility maximization problem.  
\begin{theorem}\label{theorem:gbp-per}
Suppose both the fictitious network and the physical network are empty at time $t=0$, i.e., 
all the queues are zero. Then, 
(i) Both the fictitious network and the physical network are stable under \textsf{G-BP}, and 
(ii) Denote $\bv{r}^{\textsf{G-BP}}$ the time average rate vector achieved by \textsf{G-BP}. We have: 
\begin{eqnarray}
U(\bv{r}^{\textsf{G-BP}})\geq U(\bv{r}^{\textsf{opt}})-\frac{B}{V}-2^{n}\beta\eta. \quad\Diamond\label{eq:utility-gbp}
\end{eqnarray}
\end{theorem}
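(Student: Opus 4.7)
The strategy is a standard Lyapunov drift-plus-penalty argument based on Lemma~\ref{lemma:drift-ineq}, leveraging that \tsf{G-BP} chooses its controls to minimize the right-hand side of (\ref{eq:drift-utility}) at every slot. Since this minimum is upper bounded by the value attained under any other feasible policy, it suffices to compare \tsf{G-BP} against a convenient reference.

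The reference I would pick is a stationary randomized policy for the fictitious system that targets the scaled optimum $(1-\eta)\bv{r}^{\textsf{opt}}$, which sits strictly inside $\Lambda_n$: set $\gamma^*_{sd}(t)=(1-\eta)r^{\textsf{opt}}_{sd}$ deterministically; pick $R^*_{sd}(t)$ i.i.d.\ with mean $(1-\eta)r^{\textsf{opt}}_{sd}$; and schedule the fictitious-system links so that, in expectation, each queue's service rate equals (or dominates) its arrival rate. The existence of such balanced services follows from Theorem~1 combined with the Bernoulli splitters $X_m,Y_m$ built into the dynamics, which automatically route each flow equally through the $2^{n-1}$ partition nodes. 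Substituting the reference into (\ref{eq:drift-utility}), every $Q$-weighted bracket is nonpositive in conditional expectation, the $q_d$-term cancels since $\sum_s R^*_{sd}(t)$ has mean at most $1-\eta$, and the $H_{sd}$-term contributes $-V\sum_{sd}U_{sd}((1-\eta)r^{\textsf{opt}}_{sd})$. Hence
\[\Delta(t)-V\expect{\sum_{sd}U_{sd}(\gamma_{sd}(t))\left.|\right.\bv{Z}(t)}\leq B-V\sum_{sd}U_{sd}((1-\eta)r^{\textsf{opt}}_{sd}).\]
By concavity and $U'_{sd}\leq\beta$, $U_{sd}(r)-U_{sd}((1-\eta)r)\leq\beta\eta r$, and summing with $\sum_s r^{\textsf{opt}}_{sd}\leq 1$ yields $U(\bv{r}^{\textsf{opt}})-\sum_{sd}U_{sd}((1-\eta)r^{\textsf{opt}}_{sd})\leq 2^{n}\beta\eta$, so the right-hand side above is at most $B-VU(\bv{r}^{\textsf{opt}})+V\cdot 2^{n}\beta\eta$. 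Taking total expectation, summing over $t=0,\ldots,T-1$, dividing by $VT$, and using $L(0)=0$, $L(T)\geq0$ gives $\frac{1}{T}\sum_{t}\expectm{\sum_{sd}U_{sd}(\gamma_{sd}(t))}\geq U(\bv{r}^{\textsf{opt}})-B/V-2^{n}\beta\eta$. Stability of $H_{sd}$ (a by-product of the same drift bound) forces the long-run averages of $\gamma_{sd}$ and $R_{sd}$ to coincide, and Jensen's inequality applied to the concave $U_{sd}$ then delivers (ii).

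For (i), the same bounded-drift argument gives stability of every queue in the fictitious network, and the physical queues in columns $1$ through $n-1$ inherit stability because \tsf{G-BP} drives them identically to the fictitious queues. For the uncontrolled FIFO queues in columns $n$ through $2n-1$, I would combine Lemma~\ref{lemma:balanced-load}(b) with Lemma~\ref{lemma:unique-path}: each $(s,d)$ flow arrives at every partition node at rate $r^{\tsf{G-BP}}_{sd}/2^{n-1}$, and by path uniqueness exactly $2^{l}$ partition nodes route their $d$-traffic through any node $m\in\script{C}_{n+l}$. The total arrival rate on any downstream link $[m,m_u]$ is then $\sum_{d\in\script{O}_{m_u}}2^{l}\sum_s r^{\tsf{G-BP}}_{sd}/2^{n-1}\leq 2^{n-l-1}\cdot 2^{l}(1-\eta)/2^{n-1}=1-\eta$, using $q_d$-stability ($\sum_s r^{\tsf{G-BP}}_{sd}\leq 1-\eta$) together with $|\script{O}_{m_u}|=2^{n-l-1}$ from Lemma~\ref{lemma:unique-path}. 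The $\eta$ slack guarantees stability.

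The main obstacle is the first step: verifying that, under the constructed reference policy, every $Q$-weighted term in the long expression (\ref{eq:drift-utility}) individually becomes nonpositive in conditional expectation. This hinges on decomposing $(1-\eta)\bv{r}^{\textsf{opt}}$ cleanly into $2^{n-1}$ equal pieces across the partition nodes, which the Bernoulli splitters $X_m,Y_m$ are explicitly designed to implement. Once that balance is in hand, the rest is standard drift-plus-penalty algebra.
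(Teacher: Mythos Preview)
Your utility argument (ii) is essentially the paper's: scale the optimum by $(1-\eta)$, invoke a balanced stationary reference, plug into the drift inequality, telescope, and apply Jensen plus stability of $H_{sd}$. One correction: the existence of a balanced reference allocation with $\mu^{\tsf{U}}_{m,m_u}=\mu^{\tsf{U}}_{m,m_l}$ and $\mu^{\tsf{L}}_{m,m_u}=\mu^{\tsf{L}}_{m,m_l}$ is the content of Theorem~\ref{theorem:rate-split}, not Theorem~1 (which only describes $\Lambda_n$). The Bernoulli splitters $X_m,Y_m$ are part of the \emph{actual} algorithm's dynamics, not the reference policy; what you need is that the reference \emph{service} rates match the halved arrival rates that the splitters produce, and Theorem~\ref{theorem:rate-split} is precisely what supplies such a balanced profile.

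For (i), your claim that ``the same bounded-drift argument gives stability of every queue in the fictitious network'' is a genuine gap. The drift bound you derived has a constant right-hand side, $B-VU(\bv{r}^{\textsf{opt}})+V\cdot2^n\beta\eta$, with no negative queue-weighted term; this does not by itself imply bounded time-average queues. To extract stability from the drift you would have to show the reference leaves strict slack on every link of the fictitious network and then pull out a $-\epsilon\sum Q$ term, which you have not set up. The paper takes a different and more direct route: it argues deterministically from the algorithm's structure. The auxiliary-variable step forces $H_{sd}(t)\leq V\beta+A_{\max}$; admission control then caps $Q_s^{\script{T}}(t)$ and $q_d(t)$ by $V\beta+O(2^nA_{\max})$; and the backpressure weights in (\ref{eq:weight-1-up})--(\ref{eq:sum-queue-2}) cascade these bounds down the columns, yielding $Q_m^{\tsf{UU}}(t)+Q_m^{\tsf{UL}}(t)\leq 2^{j_m}(V\beta+(2^{n-1}+1)A_{\max})+\sum_{l=1}^{j_m}2^l$ and similarly for $Q_m^{D_i}$. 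No Lyapunov argument is used for the fictitious queues at all. Your treatment of the second-half physical queues via Lemmas~\ref{lemma:unique-path} and~\ref{lemma:balanced-load} plus $q_d$-stability matches the paper, though the paper also invokes Lemma~\ref{lemma:stable} to convert the rate inequality $\leq 1-\eta$ into actual queue stability for the FIFO queues.
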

\begin{proof}
See Appendix C. 
\end{proof}
From (\ref{eq:utility-gbp}), we see that the utility performance of \tsf{G-BP} can arbitrarily approach the optimal as we increase $V$ and decrease $\eta$. However, doing so will increase the average network delay. Hence, there is a natural tradeoff between the utility performance and the network delay. 

Note that though the performance results in Theorem \ref{theorem:gbp-per} look similar to previous results in \cite{neelyfairness}, the proof is indeed quite different. This is because in our case, we impose a special queueing structure on the network, and the second half of the network uses a free-flow routing. These two features make the analysis very different from the usual backpressure algorithms, 
under which each node maintains a separate queue for each flow, and all the network actions are based on the network queue sizes. 

\vspace{-.1in}
\subsection{Discussion on implementation}
We note that the \tsf{G-BP} algorithm can easily be implemented in a fully distributed manner. 
Specifically, one can maintain the virtual admission queues at the input servers and maintain the virtual output regulation queues at the output servers using counters, as shown in Fig. \ref{fig:implement}. 
%
With this arrangement, the auxiliary variable selection step can easily be done locally at the input servers, and the routing and scheduling step can easily be done by each node exchanging queue information only with its four neighbors. 
The admission control step requires the input servers to know the regulation queue sizes. This can be achieved by message passing the regulating queue sizes along the network using prioritized packets. Similarly, the update of the regulation queues requires the knowledge of the arrivals for the output port. 
This can be approximated by using the arrivals to the output servers as the input to the regulation queues. Though message passing and queue approximation may incur performance loss in practice, we will see in the simulation section that, the \tsf{G-BP} algorithm is indeed very robust and can still achieve near-optimal performance even under different message passing delays and regulation queue approximation. 

Finally, note that though we have described implementing our algorithm with actual data queue sizes. In practice, to further reduce network delay, we can also implement \tsf{G-BP} with counters to keep track of the queue processes that should have been generated for decision making, and admit slightly smaller arrival rates than \tsf{G-BP}. 
\begin{figure}[cht]
\centering
\vspace{-.1in}
\includegraphics[height=2.0in, width=3.4in]{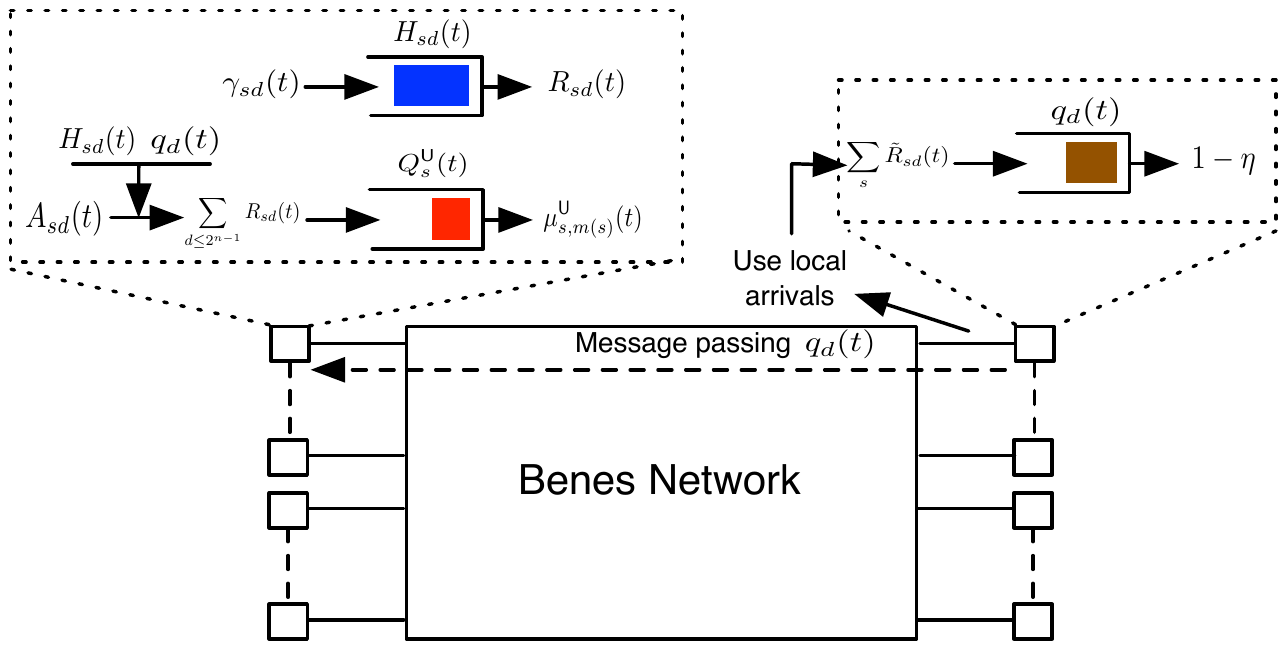}
\vspace{-.1in}
\caption{Implementation of \tsf{G-BP}. The virtual admission queues are maintained at the input servers, while the virtual regulation queues are maintained at the output servers. Message passing is used to send regulation queue information through the network for admission control. The regulation queues can use the local arrivals to the output servers as the input.}
\label{fig:implement}
\vspace{-.1in}
\end{figure}

\vspace{-.1in}
\section{Simulation}\label{section:simulation}
In this section, we present the simulation results of \tsf{G-BP}  on a $2^4\times2^4$ size Benes network. For simplicity, we assume that $A_{sd}(t)=A_{\max}=2$ for all time. 

In the simulation, we assume that every flow has a utility function $\log(1+r_{sd})$. In every time slot, each flow can admit $0$, $1$ or $2$ packets.  We simulate the system for $V\in\{5, 10, 20, 50, 100 \}$ and $\eta=0.01$. Each simulation is run for $10^5$ slots. 
%
To test the robustness of \tsf{G-BP} against the delay and sparsity in message passing and the regulation queue approximation, we simulate four different cases. 
(i) The original \tsf{G-BP} algorithm, where the message passing delay is zero and the regulation queue is exact. (ii) The case when the input to the regulation queue $q_d(t)$ are the actual packet arrivals to the output server $d$ (the service rate is still $1-\eta$), and admission control at time $t$ uses $q_d(t-(2n-1))$ instead of $q_d(t)$. (iii) Similar to the second case, but  admission control at time $t$ uses $q_d(t-5(2n-1))$. (iv) Similar to the second case, but the regulation queue information is only sent every $5(2n-1)$ slots and has a delay of $5(2n-1)$. That is, admission control at time $t$ uses $q_d(t_0)$ where $t_0=\max[(\lfloor \frac{t}{5(2n-1)}\rfloor-1)5(2n-1), 1]$.

\begin{figure}[cht]
\centering
\vspace{-.1in}
\includegraphics[height=2in, width=3.4in]{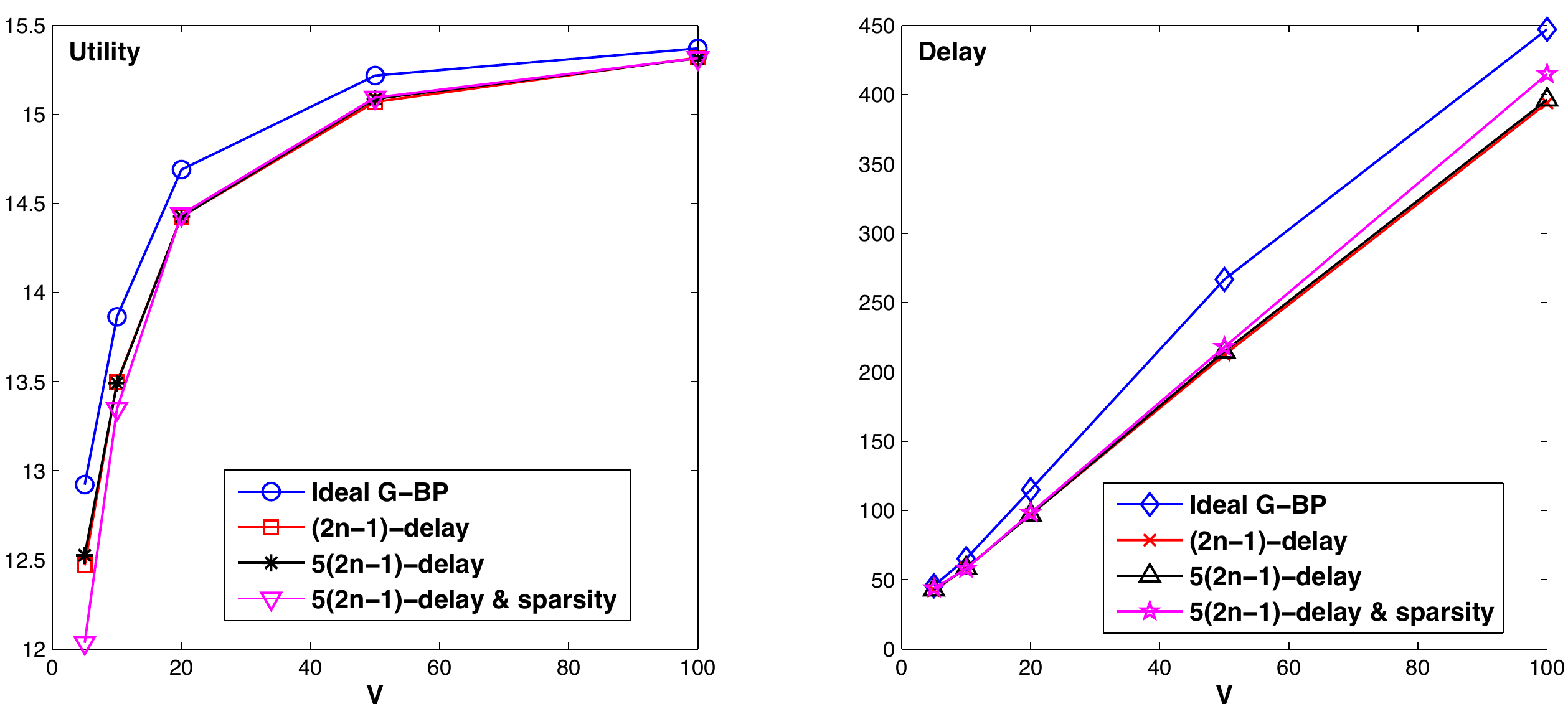}
\vspace{-.1in}
\caption{The  aggregate flow utility  and average packet delay under \tsf{G-BP}. One can see that \tsf{G-BP} works very well even with message passing delay and regulation queue approximation.}
\label{fig:delay-utility-gbp}
\vspace{-.1in}
\end{figure}
Fig. \ref{fig:delay-utility-gbp} shows the performance of the \tsf{G-BP} algorithm. Here the average delay (in number of slots) is computed using the set of packets that are delivered when the simulation ends. For all simulations, this set contains more than $99.9\%$ of the total packets that enter the network. We see that 
as we increase the $V$ value, the aggregate flow utility quickly converges to its optimal value. However, doing so also leads to a linear increase of the average packet delay. We also see from the figure that, \tsf{G-BP} is indeed very robust to the delay and sparsity in  message passing, and regulation queue approximation. 

In Fig. \ref{fig:queue-process}, we plot a recorded queue process of the network under \tsf{G-BP}  for $V=10$. In this case, we change each flow's utility function to $w_{sd}\log(1+r_{sd})$ in the middle of the simulation, where $w_{sd}$ takes values $1, 2$ or $3$ equally likely.  
We  see that after the change, \tsf{G-BP} quickly adapts to the new utility functions  and performs admission and routing accordingly. 
\begin{figure}[cht]
  \centering
  \vspace{-.1in}
\includegraphics[height=2in, width=3.4in]{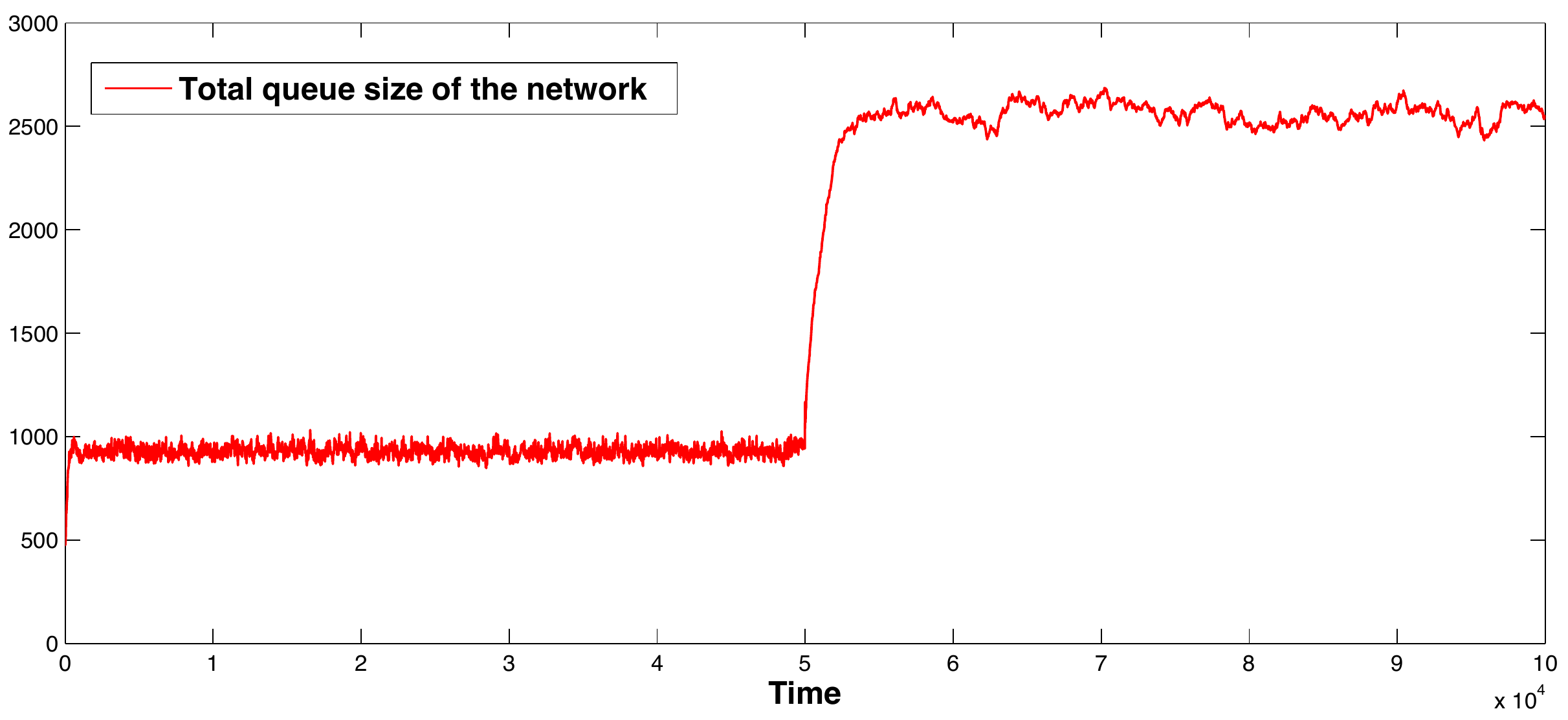}
\vspace{-.15in}
  \caption{The total network queue size under \tsf{G-BP} with $V=10$.}
  \label{fig:queue-process}
  \vspace{-.1in}
\end{figure}

Finally, we also evaluate the average packet delay as a function of the network size, to see how the algorithm scales. As comparison, we also simulate an ``enhanced'' \tsf{G-BP} algorithm, in which we replace each queue value in the algorithm with the queue value plus the node's hop count to the destination, i.e., its column number. The idea is to create ``bias'' towards the packet destinations. This enhancement is similar to the EDRPC algorithm developed in \cite{neelypowerjsac}. 
We can see from Fig. \ref{fig:scaling-bias} that the average packet delay under \tsf{G-BP} scales as $\Theta(n^2)$. Since the Benes network size is $\Theta(2^n)$, this implies that the \emph{average delay grows only logarithmically in the network size}. 
\begin{figure}[cht]
\centering
\vspace{-.1in}
\includegraphics[height=2.0in, width=3.4in]{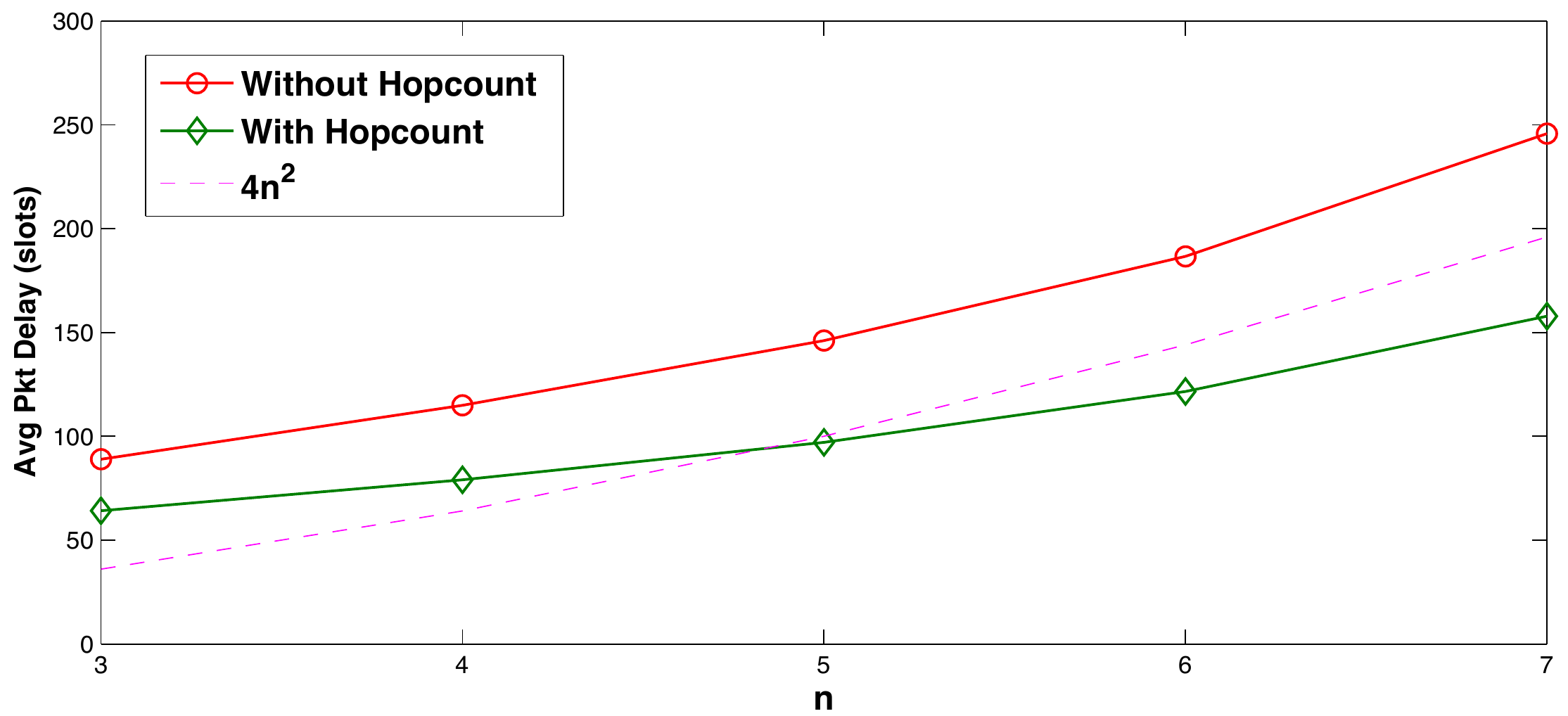}
\vspace{-.1in}
\caption{Average delay as a function of the network size under $V=10$.}
\label{fig:scaling-bias}
\vspace{-.1in}
\end{figure}

Note that in Fig. \ref{fig:scaling-bias} we have plotted the average delay in number of slots. To get some physical understanding of the results, assume that each packet has $500$ bytes and each link has a capacity of $1$ Gbit/second, which are both quite common in practice. Then, every slot is $4$ microseconds. Hence, we see that the average packet delay under Benes network with  \tsf{G-BP} is roughly $1$ millisecond when the network size is $128\times128$. 
This demonstrates the good delay performance of our network design approach. 

\vspace{-.1in}
\section{Conclusion}\label{section:conclusion}
In this work, we develop a novel networking solution called \emph{Benes packet network}, which consists of a Benes network built with simple commodity switches, a flow utility maximization mechanism, and a Grouped-Backpressure (\tsf{G-BP}) routing and scheduling algorithm. We show that this combination can achieve a near-optimal flow utility and ensure small end-to-end delay for the traffic flows. Our approach also only requires each switch module to maintain at most four queues regardless of the network size, and can easily be implemented in practice in a fully distributed manner. 

\vspace{-.1in}
\section*{Appendix A -- Proof of Lemma \ref{lemma:balanced-load}}
\begin{proof} (Lemma \ref{lemma:balanced-load}) 
We first prove Part (a). From the queueing dynamic equation  (\ref{eq:queue-bernes-net}), we see that for any node $m\in\cup_{j=1}^{n-1}\script{C}_j$, the input rates into $Q_{m}^{\textsf{UU}}(t)$ and $Q_{m}^{\textsf{UL}}(t)$ are equal because of  random splitting.  
Similarly, the input rates into $Q_{m}^{\textsf{LU}}(t)$ and $Q_{m}^{\textsf{LL}}(t)$ are the same. Hence, if the fictitious network is stable,  the output rates from these queues are equal to their input rates \cite{neelynowbook}. Therefore (\ref{eq:per-node-rate}) holds. 

Now we prove Part (b) by induction. 
 First we see that it holds for any $4\times4$ Benes network. This is because if the fictitious network is stable, then the input switch modules split the incoming flows equally into the two partition nodes (see Fig. \ref{fig:benes-demo}). 

Now suppose the same is true for a $2^{n-1}\times 2^{n-1}$ Benes network, we want to show that it also holds for a $2^n\times2^n$ Benes network. 

To see this, note from Fig. \ref{fig:benes-demo} that each $2^n\times2^n$ Benes network consists of two $2^{n-1}\times2^{n-1}$ subnetworks, $2^{n-1}$ input switch modules and $2^{n-1}$  output switch modules. 
According to the structure of the Benes network, any input switch module has one link connecting to the upper $2^{n-1}\times2^{n-1}$ subnetwork and the other one connecting to the lower $2^{n-1}\times2^{n-1}$ subnetwork. From Part (a), we see that half of a flow's rate will be routed through the upper subnetwork and the other half will be routed through the lower subnetwork. 
Now consider  the upper subnetwork and view the flow traffic into this subnetwork as its own external input. Since this subnetwork is also stable, the flow's traffic will be equally split and routed via its partition nodes by induction. Since all the partition nodes coincide according to Fact \ref{fact:partition-node-coincide}, we see that the lemma follows. 
\end{proof}

\vspace{-.1in}
\section*{Appendix B -- Proof of Lemma \ref{lemma:drift-ineq}}
Here we present the proof of Lemma \ref{lemma:drift-ineq}. 
\begin{proof}
Squaring both sides of (\ref{eq:queue-source-q}) and using the fact that for any real number $x$, $([x]^+)^2\leq x^2$, we get for every $s\in\script{S}$ and $\script{T}\in\Omega_s$ that: 
\begin{eqnarray}
[Q_{s}^{\script{T}}(t+1)]^2\leq [Q_{s}^{\script{T}}(t)]^2 + [R_{s}^{\script{T}}(t)]^2 + [\mu_{s, m(s)}^{\script{T}}(t)]^2 \label{eq:square-q}\\
- 2Q_{s}^{\script{T}}(t)[\mu_{s, m(s)}^{\script{T}}(t) - R_{s}^{\script{T}}(t)]. \nonumber
\end{eqnarray}
Now note that $\mu_{s, m(s)}^{\script{T}}(t)\leq1$ and $R_{s}^{\script{T}}(t)\leq 2^{n-1}A_{\max}$. Hence, if we define $B_1\triangleq 2(2^n+2^{3n-2}A^2_{\max})$ and sum (\ref{eq:square-q}) over $s\in\script{S}$ and $\script{T}\in\Omega_s$, we have: 
\begin{eqnarray*}
\hspace{-.4in}&&\sum_{s\in\script{S}, \script{T}\in\Omega_s}[Q_{s}^{\script{T}}(t+1)]^2\leq \sum_{s\in\script{S},\script{T}\in\Omega_s}[Q_{s}^{\script{T}}(t)]^2 + B_1\\
\hspace{-.4in}&&\qquad\qquad\qquad - 2\sum_{s\in\script{S},\script{T}\in\Omega_s}Q_{s}^{\script{T}}(t)[\mu_{s, m(s)}^{\script{T}}(t) - R_{s}^{\script{T}}(t)]. \label{eq:square-q2}
\end{eqnarray*}
Using a similar argument as above, we get the following: 
\begin{eqnarray*}
\hspace{-.4in}&&\sum_{m\in\cup_{j=1}^{n-1}\script{C}_j, \script{T}\in\Omega_B}[Q_{m}^{\script{T}}(t+1)]^2-\sum_{m\in\cup_{j=1}^{n-1}\script{C}_j, \script{T}\in\Omega_B}[Q_{m}^{\script{T}}(t)]^2 \\
\hspace{-.4in}&& \leq B_2 - 2\sum_{m\in\cup_{j=1}^{n-1}\script{C}_j, \script{T}\in\Omega_B}Q_{m}^{\script{T}}(t)[\mu_{m, m(\script{T})}^{\script{T}}(t) - R_{m}^{\script{T}}(t)]. \label{eq:square-q2}
\end{eqnarray*}
Here $B_2\triangleq 20(n-1)2^{n-1}$. 
Now repeat the above for all the other queues in the fictitious system, we will also get: 
\begin{eqnarray*}
\hspace{-.4in}&&\sum_{m\in\script{C}_n,i}[Q_{m}^{D_i}(t+1)]^2 - \sum_{m\in\script{C}_n,i}[Q_{m}^{D_i}(t)]^2\\
\hspace{-.4in}&&\qquad\qquad\qquad\leq B_3-2\sum_{m\in\script{C}_n,i}Q_m^{D_i}(t)[ \mu_{m, D_i}(t) - R^{D_i}_m(t)], \\
\hspace{-.4in}&&\sum_{s, d}[H_{sd}(t+1)]^2 - \sum_{s,d}[H_{sd}(t)]^2\\
\hspace{-.4in}&&\qquad\qquad\qquad\leq B_4-2\sum_{s,d}H_{sd}(t)[ R_{sd}(t) - \gamma_{sd}(t)], \\
\hspace{-.4in}&&\sum_{d}[q_{d}(t+1)]^2 - \sum_{d}[q_{d}(t)]^2\\
\hspace{-.4in}&&\qquad\qquad\qquad\leq B_5-2\sum_{d}q_{d}(t)[ 1-\eta - \sum_{s}R_{sd}(t)]. 
\end{eqnarray*}
Here $B_3=5\cdot2^n$, $B_4=2^{2n+1}A_{\max}^2$ and $B_5=2^n+2^{3n}A_{\max}^2$. 

Summing all the resulting inequalities, multiplying both sides by $\frac{1}{2}$ and taking expectations on both sides conditioning on $\bv{Z}(t)$, we obtain the following: 
\begin{eqnarray}
\hspace{-.2in}&&\Delta(t) \leq B  \label{eq:drift-ineq-step1}\\
\hspace{-.2in}&&- \sum_{s\in\script{S}}\sum_{\script{T}\in\Omega_s} Q_{s}^{\script{T}}(t)\expect{   \mu_{s, m(s)}^{\script{T}}(t)   - R_s^{\script{T}}(t)\left.|\right. \bv{Z}(t) } \nonumber \\
\hspace{-.2in}&&- \sum_{m\in\cup_{j=1}^{n-1}\script{C}_j}\sum_{\script{T}\in\Omega_B} Q_{m}^{\script{T}}(t)\expect{   \mu_{m, m(\script{T})}^{\script{T}}(t)   - R_m^{\script{T}}(t)\left.|\right. \bv{Z}(t) } \nonumber \\
\hspace{-.2in}&&\qquad -\sum_{m\in\script{C}_n} \sum_{i=1,2}Q_{m}^{D_i}(t)\expect{   \mu_{m, D_i}(t)   - R_{m}^{D_i}(t) \left.|\right. \bv{Z}(t) }  \nonumber\\
\hspace{-.2in}&&\qquad-\sum_{s, d} H_{sd}(t) \expect{  R_{sd}(t) -\gamma_{sd}(t) \left.|\right. \bv{Z}(t)}\nonumber \\
\hspace{-.2in}&&\qquad -\sum_{d} q_d(t)  \expect{  1-\eta - \sum_{s}R_{sd}(t) \left.|\right. \bv{Z}(t)}. \nonumber
\end{eqnarray}
Here the constant $B\triangleq\frac{1}{2}\sum_{i=1,...,5}B_i$, i.e.,  
\begin{eqnarray}
B=\frac{1}{2}[2^{n}(10n-2) +A_{\max}^2(2^{3n-1}+2^{2n+1}+2^{3n})]. 
\end{eqnarray}
Now by adding to both sides of (\ref{eq:drift-ineq-step1}) the term $-V\expectm{\sum_{sd}U_{sd}(\gamma_{sd}(t))\left.|\right.\bv{Z}(t)}$, we get: 
\begin{eqnarray}
\hspace{-.2in}&&\Delta(t) - V\expect{\sum_{s, d} U_{sd}(\gamma_{sd}(t))\left.|\right.\bv{Z}(t) }\label{eq:drift-utility-rate-form}\\
\hspace{-.2in}&&\leq B  - \sum_{s}\sum_{\script{T}\in\Omega_s} Q_{s}^{\script{T}}(t)\expect{   \mu_{s, m(s)}^{\script{T}}(t)   - R_s^{\script{T}}(t)\left.|\right. \bv{Z}(t) } \nonumber \\
\hspace{-.2in}&&- \sum_{m\in\cup_{j=1}^{n-1}\script{C}_j}\sum_{\script{T}\in\Omega_B} Q_{m}^{\script{T}}(t)\expect{   \mu_{m, m(\script{T})}^{\script{T}}(t)   - R_m^{\script{T}}(t)\left.|\right. \bv{Z}(t) } \nonumber \\
\hspace{-.2in}&&\qquad -\sum_{m\in\script{C}_n} \sum_{i=1,2}Q_{m}^{D_i}(t)\expect{   \mu_{m, D_i}(t)   - R_{m}^{D_i}(t) \left.|\right. \bv{Z}(t) }  \nonumber\\
\hspace{-.2in}&&\qquad - V\expect{\sum_{s, d} U_{sd}(\gamma_{sd}(t))\left.|\right.\bv{Z}(t) }\nonumber \\
\hspace{-.2in}&&\qquad-\sum_{s, d} H_{sd}(t) \expect{  R_{sd}(t) -\gamma_{sd}(t) \left.|\right. \bv{Z}(t)}\nonumber \\
\hspace{-.2in}&&\qquad -\sum_{d} q_d(t)  \expect{  1-\eta - \sum_{s}R_{sd}(t) \left.|\right. \bv{Z}(t)}. \nonumber
\end{eqnarray}
Lemma \ref{lemma:drift-ineq} then follows by rearranging the terms, and using the definitions of $R_{s}^{\script{T}}(t)$, $R_{m}^{\script{T}}(t)$ and $R_{m}^{D_i}(t)$ in equations (\ref{eq:input-server-rate}), (\ref{eq:income-rate-split-1}) and (\ref{eq:income-rate-split-2}). 
\end{proof}



\vspace{-.1in}
\section*{Appendix C -- Proof of Theorem \ref{theorem:gbp-per}}
In this section, we prove Theorem \ref{theorem:gbp-per}. We first present a lemma regarding queue stability and a theorem regarding rate allocation in a Benes network.  Then, we use the two results to carry out our analysis. 

We first have the following lemma. 
\begin{lemma}\label{lemma:stable}
Let  $Q(t)\geq0, t\in\{0, 1, ...\}$ be a queueing process with the following dynamics: 
\begin{eqnarray}
Q(t+1) = \max[Q(t)-1, 0] +R(t), 
\end{eqnarray}
Suppose (i) $0\leq R(t)\leq A_{\max}$ for all $t$, and  (ii) $\lim_{T\rightarrow\infty}\frac{1}{T}\sum_{t=0}^{T-1}R(t)\leq1-\eta$ for $0<\eta<1$ with probability $1$ (w.p.$1$). Then, $Q(t)$ is stable. $\Diamond$
\end{lemma}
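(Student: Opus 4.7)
The plan is to use the quadratic Lyapunov drift method combined with a $T_0$-slot drift argument that converts the almost-sure long-run-average hypothesis on $R$ into a usable per-window bound.

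Let $L(t) := \tfrac{1}{2}Q(t)^2$. Squaring the recursion $Q(t+1) = [Q(t)-1]^+ + R(t)$ and using $([x]^+)^2 \le x^2$ together with $R(t) \le A_{\max}$ gives the standard single-slot bound
\[
L(t+1)-L(t) \;\le\; \tfrac{1}{2}(1+A_{\max}^2) - Q(t)\bigl(1 - R(t)\bigr).
\]
Telescoping and taking expectation yields $\sum_{t<T}\mathbb{E}[Q(t)(1-R(t))] \le \tfrac{(1+A_{\max}^2)T}{2} + \mathbb{E}[L(0)]$. This inequality on its own is not enough, since $R(t)$ may be correlated with $Q(t)$ in an arbitrary way, so $\mathbb{E}[Q(t)(1-R(t))]$ need not dominate $\eta\,\mathbb{E}[Q(t)]$; a decoupling step is required.

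For the decoupling I pass to a $T_0$-slot drift. Iterating the recursion exactly yields
\[
Q(t+T_0) \;=\; Q(t) + \sum_{\tau=t}^{t+T_0-1}R(\tau) - T_0 + N_0(t,T_0),
\]
where $N_0(t,T_0)$ counts empty slots in the window $[t,t+T_0)$. Whenever $Q(t) \ge T_0$, the queue cannot drain inside the window, so $N_0(t,T_0)=0$, and on the "good" event $B_t := \{\sum_{\tau=t}^{t+T_0-1}R(\tau) \le (1-\eta/2)T_0\}$ we obtain $Q(t+T_0) \le Q(t) - \tfrac{\eta}{2}T_0$. On $B_t^c$ the crude bound $Q(t+T_0) \le Q(t) + T_0 A_{\max}$ always holds. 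Expanding $L(t+T_0)-L(t) = \tfrac{1}{2}\bigl(Q(t+T_0)-Q(t)\bigr)\bigl(Q(t+T_0)+Q(t)\bigr)$, conditioning on $\mathcal{F}_t$, and choosing $T_0$ so large that $\sup_t\mathbb{P}(B_t^c\mid\mathcal{F}_t) \le \delta$ for a fixed $\delta<\eta/(2A_{\max}+\eta)$, produces a $T_0$-step Foster--Lyapunov inequality of the form
\[
\mathbb{E}[L(t+T_0)-L(t)\mid \mathcal{F}_t] \;\le\; C - c\,Q(t) \quad \text{when } Q(t) \ge T_0,
\]
with constants $c,C>0$ depending only on $T_0,\eta,A_{\max},\delta$. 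Telescoping along $t = kT_0$ and using $L \ge 0$ then gives $\limsup_T \tfrac{1}{T}\sum_{t<T}\mathbb{E}[Q(t)] < \infty$.

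The main obstacle will be obtaining the uniform-in-$t$ bound on $\mathbb{P}(B_t^c \mid \mathcal{F}_t)$, since the stated hypothesis controls only the full-horizon empirical average of $R$, not averages over a fixed window of length $T_0$. I would close this gap by combining the w.p.$1$ condition with bounded convergence to deduce $\tfrac{1}{T}\sum_{\tau<T}\mathbb{E}[R(\tau)] \le 1-\eta + o(1)$, and then applying a Markov/Cesaro windowing argument (possibly after a sufficiently long burn-in period, which only affects constants) to transfer this asymptotic bound into the required uniform finite-window control.
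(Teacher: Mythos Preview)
Your approach is genuinely different from the paper's, and it has a real gap at exactly the point you flag as the ``main obstacle.''

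The hypothesis $\lim_{T\to\infty}\frac{1}{T}\sum_{t<T}R(t)\le 1-\eta$ a.s.\ gives \emph{no} control over averages of $R$ in windows of any fixed length $T_0$. For a concrete obstruction, take $R$ deterministic and equal to $A_{\max}$ on blocks of length $k$ placed at times $2^k$, and $0$ elsewhere; the Ces\`aro average tends to $0$, yet for every fixed $T_0$ there are infinitely many $t$ with $\sum_{\tau=t}^{t+T_0-1}R(\tau)=T_0A_{\max}$, so $\mathbb{P}(B_t^c)=1$ infinitely often. No ``Markov/Ces\`aro windowing argument'' can convert a long-run-average hypothesis into uniform fixed-window concentration without extra structure (stationarity, mixing, or similar), and none is assumed here. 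Your $T_0$-step Foster--Lyapunov inequality therefore cannot be established from the stated assumptions, and the argument stalls.

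The paper avoids this entirely by working sample-pathwise and \emph{not} fixing a window length. It argues by contradiction: if $Q(t^*)>M$, let $t_0^*$ be the start of the enclosing busy period; since $Q$ never hits zero on $[t_0^*,t^*]$, the recursion is exact and $Q(t^*)=\sum_{\tau=t_0^*}^{t^*-1}R(\tau)-(t^*-t_0^*-1)$. Because arrivals are bounded by $A_{\max}$, a large $Q(t^*)$ forces the busy period to be long, namely $t^*-t_0^*\ge M/A_{\max}$. Taking $M=A_{\max}T_{(\eta/2)}$, the window $[t_0^*,t^*)$ is long enough that the almost-sure average bound applies to \emph{that particular} window, yielding $Q(t^*)\le 0$, a contradiction. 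The key idea you are missing is this coupling between the size of $Q$ and the length of the window over which the a.s.\ hypothesis is invoked; once the window is allowed to grow with the queue, the Ces\`aro hypothesis becomes usable.
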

\begin{proof}
See Appendix D. 
\end{proof}

To state the theorem needed for our analysis, we first define the notion of  a \emph{stabilizing rate allocation profile} for the \emph{fictitious} system. In the definition, we use $\script{L}$ to denote the set of network links in the fictitious network, and use $\mu^{\tsf{U}}_{m_1, m_2}$ and $\mu^{\tsf{L}}_{m_1, m_2}$ to denote the rates of the upper division flow traffic and the lower division flow traffic sent from node $m_1$ to node $m_2$, respectively. 
\begin{definition} (Stabilizing rate allocation profile)   
For an arrival rate vector $\bv{r}$, a stabilizing rate allocation profile $\bv{\mu}(\bv{r})=(\mu^{\tsf{U}}_{m, m'}, \, \mu^{\tsf{L}}_{m, m'}, \,\forall\, [m, m']\in\script{L})$ is a vector that satisfies the following:
\begin{eqnarray}
\hspace{-.5in}&&\sum_{d\leq2^{n-1}}r_{sd}\leq \mu^{\tsf{U}}_{s, m(s)},  \, \sum_{d>2^{n-1}}r_{sd}\leq \mu^{\tsf{L}}_{s, m(s)}, \,\forall\,s\in\script{S}, \label{eq:stable-rate-cond1}\\
\hspace{-.5in}&&\sum_{m'\in\script{M}_m}\mu^{\tsf{U}}_{m', m} \leq \mu^{\tsf{U}}_{m, m_u} + \mu^{\tsf{U}}_{m, m_l}, \,\forall\, m\in\cup_{j=1}^{n}\script{C}_j,\label{eq:stable-rate-cond2}\\
\hspace{-.5in}&& \sum_{m'\in\script{M}_m}\mu^{\tsf{L}}_{m', m} \leq \mu^{\tsf{L}}_{m, m_u} + \mu^{\tsf{L}}_{m, m_l}, \,\forall\, m\in\cup_{j=1}^{n}\script{C}_j,\label{eq:stable-rate-cond3}\\
\hspace{-.5in}&&\mu^{\tsf{U}}_{m, m'} + \mu^{\tsf{L}}_{m, m'}\leq1, \,\mu^{\tsf{U}}_{m, m'}, \mu^{\tsf{L}}_{m, m'}\geq0,\, \forall\,[m, m']\in\script{L},\label{eq:stable-rate-cond4}\\
\hspace{-.5in}&& \mu^{\tsf{L}}_{m, D_1} = 0, \, \mu^{\tsf{U}}_{m, D_2} = 0,\,\forall\,m\in\script{C}_n.\quad \Diamond \label{eq:stable-rate-cond5}
\end{eqnarray}
\end{definition}
In the above definition, if $m\in\script{C}_n$, i.e., $m$ is a partition node, then  $m_u=D_1$ and $m_l=D_2$. We now state the following theorem: 
\begin{theorem}\label{theorem:rate-split}
For every arrival rate vector $\bv{r}\in\Lambda_n$, there exists a stabilizing rate allocation profile $\bv{\mu}(\bv{r})=(\mu^{\tsf{U}}_{m, m'}, \mu^{\tsf{L}}_{m, m'}, \,\forall\,[m, m']\in\script{L})$ for the fictitious network that has the following property: 
\begin{eqnarray}
\mu_{m, m_u}^{\tsf{U}} = \mu_{m, m_l}^{\tsf{U}}, \,  \mu_{m, m_u}^{\tsf{L}} = \mu_{m, m_l}^{\tsf{L}},\,\forall\,m\in\cup_{j=1}^{n-1}\script{C}_j. \quad\Diamond\label{eq:rate-split}
\end{eqnarray}
\end{theorem}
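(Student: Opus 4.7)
The plan is to construct the stabilizing rate profile $\bv{\mu}(\bv{r})$ explicitly via aggregate symmetric splitting and verify the stabilizing conditions directly. At each input server $s$, set $\mu^{\tsf{U}}_{s, m(s)} = r_s^{\tsf{U}} \triangleq \sum_{d \leq 2^{n-1}} r_{sd}$ and $\mu^{\tsf{L}}_{s, m(s)} = r_s^{\tsf{L}} \triangleq \sum_{d > 2^{n-1}} r_{sd}$. Then proceed column by column: for $j = 1, \ldots, n - 1$ in increasing order and each $m \in \script{C}_j$, let $U_m$ and $L_m$ denote the total incoming upper- and lower-division rates (already fixed by the earlier columns), and set $\mu^{\tsf{U}}_{m, m_u} = \mu^{\tsf{U}}_{m, m_l} = U_m/2$ and $\mu^{\tsf{L}}_{m, m_u} = \mu^{\tsf{L}}_{m, m_l} = L_m/2$. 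At each partition node $m \in \script{C}_n$, set $\mu^{\tsf{U}}_{m, D_1} = U_m$, $\mu^{\tsf{L}}_{m, D_2} = L_m$, and the remaining two rates to zero. By construction the conservation equalities (\ref{eq:stable-rate-cond1})--(\ref{eq:stable-rate-cond3}) hold, the boundary condition (\ref{eq:stable-rate-cond5}) is immediate, and the symmetry (\ref{eq:rate-split}) is built in, so only the capacity constraint (\ref{eq:stable-rate-cond4}) requires real work.

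The easy part of the capacity check handles two cases. For input-server links, $\mu^{\tsf{U}}_{s, m(s)} + \mu^{\tsf{L}}_{s, m(s)} = r_s \leq 1$ by the row-sum constraint of $\Lambda_n$. For interior links leaving a module $m \in \script{C}_j$ with $1 \leq j \leq n - 1$, the aggregate rate on each link is $(U_m + L_m)/2$, so it suffices to prove $U_m + L_m \leq 2$ by a short induction on $j$: when $j = 1$, $m$ is fed by exactly two input servers $s_1, s_2$ and $U_m + L_m = r_{s_1} + r_{s_2} \leq 2$; when $j \geq 2$, $m$'s two incoming links come from two $\script{C}_{j-1}$ modules, each outputting aggregate rate at most $1$ by the inductive hypothesis, so $U_m + L_m \leq 2$ again.

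The delicate case is the two links out of each partition node $m \in \script{C}_n$, where only a single type of traffic flows on each link and the crude bound $U_m \leq U_m + L_m \leq 2$ is off by a factor of two. What rescues us is the Benes-specific property that aggregate symmetric splitting distributes any input type uniformly across all $2^{n-1}$ partition nodes, giving
\begin{eqnarray*}
U_m = \frac{1}{2^{n-1}} \sum_s r_s^{\tsf{U}}, \quad L_m = \frac{1}{2^{n-1}} \sum_s r_s^{\tsf{L}}, \quad \text{for every } m \in \script{C}_n.
\end{eqnarray*}
I would establish this by induction on $n$ using Fact \ref{fact:partition-node-coincide}: for $n = 1$ the single partition module receives both input rates directly; for the step, each column-$1$ input switch forwards aggregate upper rate $(r_{2k-1}^{\tsf{U}} + r_{2k}^{\tsf{U}})/2$ into each of the two $2^{n-1} \times 2^{n-1}$ subnetworks, so the total upper input to each subnetwork is $(\sum_s r_s^{\tsf{U}})/2$; applying the inductive hypothesis to each subnetwork yields rate $(\sum_s r_s^{\tsf{U}})/2^{n-1}$ at every subnetwork partition node, and by Fact \ref{fact:partition-node-coincide} these are exactly the big network's partition nodes. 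Combined with the column-sum constraint $\sum_s r_{sd} \leq 1$, this gives
\begin{eqnarray*}
\mu^{\tsf{U}}_{m, D_1} = U_m = \frac{1}{2^{n-1}} \sum_{d \leq 2^{n-1}} \sum_s r_{sd} \leq \frac{2^{n-1}}{2^{n-1}} = 1,
\end{eqnarray*}
and symmetrically $\mu^{\tsf{L}}_{m, D_2} \leq 1$, completing (\ref{eq:stable-rate-cond4}).

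The main obstacle is precisely this partition-node estimate: the row-sum constraint and the coarse interior-link induction yield only $U_m \leq 2$, twice the capacity; the tight $\leq 1$ can be extracted only by combining the column-sum constraint with the Benes-specific uniform spreading across partition nodes, a property that emerges cleanly from the nested recursive construction recorded in Fact \ref{fact:partition-node-coincide}. Once this spreading lemma is in hand, the rest of the argument is routine bookkeeping.
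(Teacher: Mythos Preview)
Your proof is correct and rests on the same core idea as the paper's: build the profile by symmetric halving at every pre-partition node and appeal to the recursive Benes structure (Fact~\ref{fact:partition-node-coincide}) to control the partition-node load. The paper organizes the induction differently, however. It inducts directly on the theorem statement: after splitting at column~$1$, it tracks the full per-flow rate matrix $\hat r_{s'd'}$ entering each $2^{n-1}\times 2^{n-1}$ subnetwork and verifies $\hat{\bv r}\in\Lambda_{n-1}$ (using \emph{both} the row- and column-sum bounds of $\Lambda_n$), so that the inductive hypothesis hands back a stabilizing profile for each subnetwork directly. You instead construct the whole profile in one pass and push the inductive work into a cleaner, type-agnostic auxiliary claim---uniform spreading of any aggregate input across all $2^{n-1}$ partition nodes---invoking the column-sum bound only once at the end. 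The two arguments are essentially reorderings of the same computation; yours isolates the spreading property explicitly, while the paper's avoids the extra lemma at the cost of carrying the $(s,d)$ indices through the induction.
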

\begin{proof}
See Appendix E. 
\end{proof}

Now we prove Theorem \ref{theorem:gbp-per}. 

\begin{proof} (Theorem \ref{theorem:gbp-per}) (\textbf{Part A-stability}) We start by  proving network stability. Our proof has two parts. In part one, we show that the fictitious network is stable, which implies that the nodes in columns $1$ to $n-1$ of the physical network are stable. Then, we show that each individual node in columns $n$ to $2n-1$ of  the physical network is stable.  

$\bullet$ (Fictitious network) From the auxiliary variable selection step (\ref{eq:gbp-gamma}) and the fact that the maximum first derivative of the utility functions is $\beta$, we see that whenever $H_{sd}(t)>V\beta$, \tsf{G-BP} will set $\gamma_{sd}(t)=0$. Hence, using the fact that $0\leq\gamma_{sd}(t)\leq A_{\max}$ for all time, we have: 
\begin{eqnarray}
0\leq H_{sd}(t)\leq V\beta+A_{\max}, \label{eq:h-q-bdd}
\end{eqnarray}
for all $(s, d)$ flows and for all time. 

Now consider the admission control step. We see that 
whenever $Q_{s}^{\tsf{U}}(t)>H_{sd}(t)$, $R_{sd}(t)=0$ for any upper division $(s, d)$ flows. Similarly, whenever $Q_{s}^{\tsf{L}}(t)>H_{sd}(t)$, $R_{sd}(t)=0$ for any lower division $(s, d)$ flows. Since for both $Q_{s}^{\tsf{U}}(t)$ and $Q_{s}^{\tsf{L}}(t)$, there can be at most $2^{n-1}A_{\max}$ new packet arrivals in a single time slot, 
we have for every $s\in\script{S}$ that: 
\begin{eqnarray}
Q_{s}^{\tsf{U}}(t)\leq V\beta +(2^{n-1}+1)A_{\max}, \label{eq:src-up-q-bdd}\\
Q_{s}^{\tsf{L}}(t)\leq V\beta +(2^{n-1}+1)A_{\max}. \label{eq:src-low-q-bdd}
\end{eqnarray}
Similarly, we also see that for every $(s, d)$ flow, if $q_d(t)>H_{sd}(t)$, then $R_{sd}(t)=0$. This together with (\ref{eq:h-q-bdd}) imply that: 
\begin{eqnarray}
q_d(t)\leq V\beta +(2^n+1)A_{\max}.  
\end{eqnarray}
Here the term $2^nA_{\max}$ is because in any time slot, there can be at most $2^nA_{\max}$ new packets entering $q_d(t)$. Hence, all the regulation queues are  also stable. 

Now consider a node $m\in\script{C}_1$ and look at its upper division queues $Q_m^{\tsf{UU}}(t)$ and $Q_m^{\tsf{UL}}(t)$. According to the routing and scheduling rules, in order for any of the two queues  to receive  new  arrivals, there must exist a node $s\in\script{M}_m$ such that:  
$Q_{m}^{\tsf{UU}}(t) + Q_{m}^{\tsf{UL}}(t) < 2 Q_{s}^{\tsf{U}}(t)$. 
These together with (\ref{eq:src-up-q-bdd}) and (\ref{eq:src-low-q-bdd}) imply that:  
\begin{eqnarray*}
&&Q_m^{\tsf{UU}}(t) + Q_m^{\tsf{UL}}(t) \\
&&\qquad\qquad\leq 2(V\beta+(2^{n-1}+1)A_{\max}) +2,\,\forall\,m\in\script{C}_1. 
\end{eqnarray*}
Here the last fudge factor $2$ is because at any time $t$, there can be at most $2$ new packet arrivals to node $m$. Similarly, we have for the lower division queues that: 
\begin{eqnarray*}
&& Q_m^{\tsf{LU}}(t) + Q_m^{\tsf{LL}}(t) \\
&& \qquad\qquad\leq 2(V\beta+(2^{n-1}+1)A_{\max}) +2, \,\forall\,m\in\script{C}_1.  
\end{eqnarray*}
With the above reasoning, one can show that for $m\in\script{C}_2$ , 
\begin{eqnarray}
Q_m^{\tsf{UU}}(t) + Q_m^{\tsf{UL}}(t) \leq 2^2(V\beta+(2^{n-1}+1)A_{\max})+2^2+2,\\
Q_m^{\tsf{LU}}(t) + Q_m^{\tsf{LL}}(t) \leq 2^2(V\beta+(2^{n-1}+1)A_{\max})+2^2+2. 
\end{eqnarray}
More generally, for every node $m\in\cup_{j=1}^{n-1}\script{C}_j$, we have: 
\begin{eqnarray*}
Q_m^{\tsf{UU}}(t) + Q_m^{\tsf{UL}}(t) \leq 2^{j_m}(V\beta+(2^{n-1}+1)A_{\max})+\sum_{l=1}^{j_m}2^l,\\
Q_m^{\tsf{LU}}(t) + Q_m^{\tsf{LL}}(t) \leq 2^{j_m}(V\beta+(2^{n-1}+1)A_{\max})+\sum_{l=1}^{j_m}2^l, 
\end{eqnarray*}
and for $m\in\script{C}_n$, we have: 
\begin{eqnarray*}
Q_m^{D_1}(t) \leq 2^{n}(V\beta+(2^{n-1}+1)A_{\max})+\sum_{l=1}^{n}2^l,\\
Q_m^{D_2}(t)  \leq 2^{n}(V\beta+(2^{n-1}+1)A_{\max})+\sum_{l=1}^{n}2^l. 
\end{eqnarray*}
This proves that the fictitious network and the nodes in columns $1$ to $n-1$ in the physical network are stable. 

$\bullet$ (Second half of the physical network) Now we show that the nodes in columns $n$ to $2n-1$ of the physical network are stable. Recall that in this second half of the physical network, there are only two queues at each switch module $m$, i.e., $Q_m^a(t)$ for the upper outgoing link $a$ and $Q_m^b(t)$ for the lower outgoing link $b$.

We first consider a partition node $m\in\script{C}_n$. Since the fictitious network is stable,  Lemma \ref{lemma:balanced-load} shows that for every $(s, d)$ flow, its rate is equally split among the $2^{n-1}$ partition nodes. 
Using Lemma \ref{lemma:unique-path}, we see that the total  flow rate going through the upper output link $a$ of $m$ is given by:  
\begin{eqnarray}
\sum_{d\leq2^{n-1}}\sum_{s}r_{sd}/2^{n-1}\leq 2^{n-1}(1-\eta)/2^{n-1}\leq 1-\eta. 
\end{eqnarray}
Here the first inequality uses the fact that the regulation queues are stable, which implies $\sum_{s}r_{sd}\leq1-\eta$. 
Thus, the total input rate into $Q_m^a(t)$ is no more than $1-\eta$, whereas the total output rate is $1$ according to \tsf{G-BP}. This, together with Lemma \ref{lemma:stable} and the fact that the maximum number of packets that can enter $Q_m^a(t)$ at any time is $2$,   imply that for any partition node $m\in\script{C}_n$, $Q_m^a(t)$ is stable. Similarly, one can show that $Q_m^b(t)$ is stable. 

Now we look at a node $m\in\script{C}_{n+1}$. Note that node $m$ is connected by two partition nodes in $\script{C}_n$. 
Using Lemma \ref{lemma:unique-path} and Lemma \ref{lemma:balanced-load}, we see that the total rate going through the output link $a$ of $m$ is given by: 
\begin{eqnarray*}
&&\quad2\sum_{d\in\script{O}_m^a}\sum_{s}r_{sd}/2^{n-1} \\
&&= 2\sum_{d\in[\kappa_m2^{n-1}, (\kappa_m+\frac{1}{2})2^{n-1}]}\sum_{s}r_{sd}/2^{n-1}\\
&&\leq 2^{n-1}(1-\eta)/2^{n-1}\\
&&\leq 1-\eta.
\end{eqnarray*}
A similar argument will show that the total rate going through the output link $b$ is also no more than $1-\eta$, proving that the nodes in $\script{C}_{n+1}$ are all stable. 
Now by repeatedly applying Lemma \ref{lemma:unique-path}, Lemma \ref{lemma:balanced-load}, and the above reasoning, one can show that for any node $m\in\cup_{j=n}^{2n-1}\script{C}_j$, the total input rates into $Q_m^a(t)$ and $Q_m^b(t)$  are both no more than $1-\eta$ while the service rates are both $1$. Hence, every node in the second half of the physical network is stable. This completes the proof of network stability.

(\textbf{Part B-utility}) We now prove the flow utility performance (\ref{eq:utility-gbp}). The analysis is done by  first constructing a near-optimal solution to an optimization problem that captures the optimal utility. Then, we show that our algorithm achieves a similar utility performance by comparing the Lyapunov drift values. 

To start, we use $\bv{A}=(A_{sd}, \forall\, (s,d))$ to denote the random arrival vector and use $\{\bv{R}^{(\bv{A}, k)}=(R^{(\bv{A}, k)}_{sd}, \forall\, (s,d)), k=1, 2, ...\}$ to denote a sequence of admission vectors under  arrival vector $\bv{A}$. 
We then formulate the following optimization problem: 
\begin{eqnarray}
\hspace{-.4in}&&\max:\,\,\,  \phi_{\eta}\triangleq\sum_{sd}U_{sd}(\gamma_{sd})\label{eq:utility-eta}\\
\hspace{-.4in}&&\quad\text{s.t.} \quad \gamma_{sd}\leq r_{sd}\triangleq\expectm{\sum_{k}p^{(\bv{A})}_kR^{(\bv{A}, k)}_{sd}},\,\,\forall\,\,s, d,\label{eq:cond-1}\\
\hspace{-.4in}&&\qquad\quad\sum_dr_{sd}=\sum_d\expectm{\sum_{k}p^{(\bv{A})}_kR^{(\bv{A}, k)}_{sd}}\leq 1,\,\,\forall\,\,s,\label{eq:cond-2}\\
\hspace{-.4in}&&\qquad\quad\sum_sr_{sd} = \sum_s\expectm{\sum_{k}p^{(\bv{A})}_kR^{(\bv{A}, k)}_{sd}}\leq 1-\eta,\,\,\forall\,\,d,\label{eq:cond-3}\\
\hspace{-.4in}&&\qquad\quad 0\leq R_{sd}^{(\bv{A}, k)}\leq A_{sd},\,\,\forall\,s,d,  \bv{A}, k,\label{eq:cond-4}\\
\hspace{-.4in}&&\qquad\quad\sum_{k}p^{(\bv{A})}_k=1, p^{(\bv{A})}_k\geq0, \,\,\forall\, \bv{A}. 
\end{eqnarray}
Here $p^{(\bv{A})}_k$ can be interpreted as the fraction of time the system uses the vector $\bv{R}^{(\bv{A}, k)}$ when the arrival vector is $\bv{A}$, and the expectation is taken over the random arrival vector $\bv{A}$. 

For any given $\eta$ value, denote $\bv{\gamma}^*(\eta), \bv{r}^*(\eta)$, and $\{\bv{R}^{(\bv{A}, k)*}(\eta), p^{(\bv{A})*}_k(\eta)\}_{k=1}^{\infty}$ an optimal solution of (\ref{eq:utility-eta}) and let the optimal value be $\phi_\eta^*$. Since each utility function $U_{sd}(\cdot)$ is concave increasing, we see that $\gamma_{sd}^*(\eta) = r_{sd}^*(\eta)$ for all $(s, d)$. We also see that $\bv{r}^*(\eta)\in\Lambda_n$, because (\ref{eq:cond-2}) and (\ref{eq:cond-3}) are sufficient conditions to guarantee that an arrival rate vector  is in $\Lambda_n$. 
%
Moreover, using an argument based on Caratheodory's theorem as in \cite{neelynowbook}, one can show that $\phi_0^*$, i.e., the value of (\ref{eq:utility-eta}) at $\eta=0$, provides an upper bound of the optimal utility of our problem, i.e., 
\begin{eqnarray}
\phi^*_0\geq U(\bv{r}^{\tsf{opt}}). 
\end{eqnarray}
This is so because any feasible rate solution to our problem must also satisfies all the constraints (\ref{eq:cond-1}) - (\ref{eq:cond-4}) with $\eta=0$. 

We create a  solution $\tilde{\bv{\gamma}}(\eta)$, $\tilde{\bv{r}}(\eta)$, $\{\tilde{\bv{R}}^{(\bv{A}, k)}(\eta), \tilde{p}^{(\bv{A})*}_k(\eta)\}_{k=1}^{\infty}$ for (\ref{eq:utility-eta}) as follows: 
\begin{eqnarray}
\hspace{-.3in}&&\tilde{\gamma}_{sd}(\eta)=(1-\eta)\gamma_{sd}^*(0), \,\,\tilde{r}_{sd}(\eta)=(1-\eta)r_{sd}^*(0), \\ 
\hspace{-.3in}&&\tilde{\bv{R}}^{(\bv{A}, k)}(\eta)=(1-\eta)\bv{R}^{(\bv{A}, k)*}(0), \tilde{p}^{(\bv{A})*}_k(\eta)=p^{(\bv{A})*}_k(\eta). 
\end{eqnarray}
It can be verified that  ($\tilde{\bv{\gamma}}_{\eta}$, $\tilde{\bv{r}}(\eta)$, $\{\tilde{\bv{R}}^{(\bv{A}, k)}(\eta), \tilde{p}^{(\bv{A})*}_k(\eta)\}_{k=1}^{\infty}$) is a feasible solution for (\ref{eq:utility-eta}). Denote the value of $\phi_\eta$ under this solution as $\tilde{\phi}_{\eta}$. 
Using the definition of $\beta$, we see that: 
\begin{eqnarray}
U_{sd}(r_{sd}^*(0))\leq U_{sd}(\tilde{r}_{sd}(\eta)) + \beta\eta r_{sd}^{*}(0). 
\end{eqnarray}
Therefore, we have: 
\begin{eqnarray}
U(\bv{r}^{\tsf{opt}})\leq\phi^*_0&=&\sum_{sd}U_{sd}(r_{sd}^{*}(0))\nonumber\\
&\leq& \sum_{sd}U_{sd}(\tilde{r}_{sd}(\eta)) + \beta\eta \sum_{sd}r_{sd}^{*}(0)\nonumber\\
&=&\tilde{\phi}_\eta + \beta\eta \sum_{sd}r_{sd}^{*}(0)\nonumber\\
&\leq& \phi^*_{\eta} +\eta \beta2^{n}. \label{eq:utility-bigger}
\end{eqnarray} 
Here the last step follows because $\bv{r}^*(0)\in\Lambda_n$, which implies $\sum_{sd}r_{sd}^{*}(0)\leq 2^n$. (\ref{eq:utility-bigger}) then implies that: 
\begin{eqnarray}
\phi^*_{\eta}  \geq U(\bv{r}^{\tsf{opt}}) -\beta \eta 2^{n}. \label{eq:utility-order}
\end{eqnarray}
Since $\bv{r}^*(\eta)\in\Lambda_n$,  by Theorem \ref{theorem:rate-split}, there exists a stabilizing rate allocation vector $\bv{\mu}(\bv{r}^*(\eta))$ that satisfies (\ref{eq:rate-split}) for all nodes in $\script{C}_1$ to $\script{C}_{n-1}$, which further  implies that there exists a stationary and randomized routing and scheduling policy $\Pi$ that achieves the following for all $m\in\cup_{j=1}^{n-1}\script{C}_j$ \cite{neelynowbook}: 
\begin{eqnarray*}
\hspace{-.3in}\expectm{\mu_{s, m(s)}^{\tsf{U}}(t)} &=& \mu_{s, m(s)}^{\tsf{U}}(\bv{r}^*(\eta)), \\
\hspace{-.3in}\expectm{\mu_{s, m(s)}^{\tsf{L}}(t)} &= &\mu_{s, m(s)}^{\tsf{L}}(\bv{r}^*(\eta)),\\
\hspace{-.3in}\expectm{\mu_{m, m_u}^{\tsf{UU}}(t)}& =& \mu_{m, m_u}^{\tsf{U}}(\bv{r}^*(\eta)), \\
\hspace{-.3in}\expectm{\mu_{m, m_l}^{\tsf{UL}}(t)} &=& \mu_{m, m_l}^{\tsf{U}}(\bv{r}^*(\eta)), \\
\hspace{-.3in}\expectm{\mu_{m, m_u}^{\tsf{LU}}(t)} &=& \mu_{m, m_u}^{\tsf{L}}(\bv{r}^*(\eta)), \\
\hspace{-.3in}\expectm{\mu_{m, m_l}^{\tsf{LL}}(t)} &=& \mu_{m, m_l}^{\tsf{L}}(\bv{r}^*(\eta)). 
\end{eqnarray*}
Here we again assume that if $m\in\script{C}_n$, then $m_u=D_1$, $m_l=D_2$, $\mu_{m, m_u}^{\tsf{UU}}(t)=\mu_{m, D_1}(t)$, $\mu_{m, m_l}^{\tsf{LU}}(t)=\mu_{m, D_2}(t)$, $\mu_{m, m_u}^{\tsf{UL}}(t)=0$, and $\mu_{m, m_l}^{\tsf{LL}}(t)=0$. 

Now since the \tsf{G-BP} algorithm is constructed by choosing the actions to minimize the RHS of (\ref{eq:drift-utility}), or equivalently (\ref{eq:drift-utility-rate-form}), we see that (\ref{eq:drift-utility-rate-form}) remains true if we plug in any alternate control actions. Thus, we plug in the solution ($\bv{\gamma}^*(\eta), \bv{r}^*(\eta)$, $\{\bv{R}^{(\bv{A}, k)*}(\eta), p^{(\bv{A})*}_k(\eta)\}_{k=1}^{\infty}$), and the routing and scheduling policy $\Pi$ above, which guarantees: 
\begin{eqnarray}
\hspace{-.4in}&&\expectm{\mu_{s, m(s)}^{\script{T}}(t)   - R_s^{\script{T}}(t)}\geq0,\,\,\forall\, s, \script{T}\in\Omega_s, \label{eq:service-arrival-1}\\
\hspace{-.4in}&&\expectm{\mu_{m, m(\script{T})}^{\script{T}}(t)   - R_m^{\script{T}}(t)}\geq0,\,\,\forall\, m\in\cup_{j=1}^{n-1}\script{C}_j, \script{T}\in\Omega_B,  \label{eq:service-arrival-2}\\
\hspace{-.4in}&&\expectm{\mu_{m, D_i} - R_m^{D_i}}\geq0,\,\,\forall\, m\in\script{C}_n, i=1,2,\label{eq:service-arrival-3}\\
\hspace{-.4in}&&\expectm{R_{sd}(t)-\gamma_{sd}(t)}\geq0, \expectm{1-\eta-\sum_sR_{sd}(t)}\geq0.  \label{eq:service-arrival-4}
\end{eqnarray}
Thus, using the definition of $\bv{\gamma}^*(\eta), \bv{r}^*(\eta)$, and (\ref{eq:service-arrival-1})-(\ref{eq:service-arrival-4}),  we see that after plugging in the alternative actions,  (\ref{eq:drift-utility-rate-form}) becomes: 
\begin{eqnarray}
\hspace{-.3in}&&\Delta(t) - V\expect{\sum_{s, d}U_{sd}(\gamma^{\tsf{G-BP}}_{sd}(t))\left.|\right.\bv{Z}(t)}\nonumber \\
\hspace{-.3in}&&\qquad\qquad\qquad\qquad\leq B- V\sum_{s,d}U_{sd}(\gamma_{sd}^*(\eta))\nonumber\\
\hspace{-.3in} &&\qquad\qquad\qquad\qquad\leq B-VU(\bv{r}^{\tsf{opt}}) + V\eta \beta2^{n}. \label{eq:drift-utility-bdd}
\end{eqnarray}
Here the last step follows from (\ref{eq:utility-order}). 
Taking expectations over $\bv{Z}(t)$ on both sides, summing (\ref{eq:drift-utility-bdd}) over $t=0, ..., T-1$, and rearranging terms, we have: 
\begin{eqnarray*}
\hspace{-.3in}&&TVU(\bv{r}^{\tsf{opt}}) -T V\eta \beta2^{n} -BT \\
\hspace{-.3in}&& \qquad\qquad\leq V\sum_{t=0}^{T-1}\expect{\sum_{s, d}U_{sd}(\gamma^{\tsf{G-BP}}_{sd}(t))} + \expect{L(0)}. 
\end{eqnarray*}
Dividing both sides by $TV$,  we get: 
\begin{eqnarray*}
\hspace{-.3in}&&U(\bv{r}^{\tsf{opt}}) -B/V - \eta \beta2^{n}  \\
\hspace{-.3in}&& \qquad\qquad\leq \frac{1}{T}\sum_{t=0}^{T-1}\expect{\sum_{s, d}U_{sd}(\gamma^{\tsf{G-BP}}_{sd}(t))} + \expect{L(0)}/TV. 
\end{eqnarray*}
Using Jensen's inequality, we have:
\begin{eqnarray*}
\hspace{-.3in}&&U(\bv{r}^{\tsf{opt}}) -B/V - \eta \beta2^{n}  \\
\hspace{-.3in}&& \qquad\qquad\leq\sum_{s, d}U_{sd}( \frac{1}{T}\sum_{t=0}^{T-1}\expectm{\gamma^{\tsf{G-BP}}_{sd}(t)}) + \expect{L(0)}/TV. 
\end{eqnarray*}
Taking a limit as $T\rightarrow\infty$ and using the fact that $\expectm{L(0)}<\infty$, we get: 
\begin{eqnarray}
U(\bv{r}^{\tsf{opt}}) -B/V - \eta \beta2^{n}  \leq \sum_{s, d}U_{sd}(\overline{\gamma}^{\tsf{G-BP}}_{sd}). 
\end{eqnarray}
Here $\overline{\gamma}^{\tsf{G-BP}}_{sd}$ is the average value of $\gamma_{sd}(t)$ under \tsf{G-BP}, i.e., 
\begin{eqnarray}
\overline{\gamma}^{\tsf{G-BP}}_{sd}\triangleq\lim_{T\rightarrow\infty} \frac{1}{T}\sum_{t=0}^{T-1}\expectm{\gamma^{\tsf{G-BP}}_{sd}(t)}. \nonumber
\end{eqnarray}
Finally, recall that all the admission queues $H_{sd}(t)$ are stable, which implies $\overline{\gamma}^{\tsf{G-BP}}_{sd}\leq r^{\tsf{G-BP}}_{sd}$. Therefore, 
\begin{eqnarray}
U(\bv{r}^{\tsf{opt}}) -B/V - \eta \beta2^{n}  \leq \sum_{s, d}U_{sd}(r^{\tsf{G-BP}}_{sd}). 
\end{eqnarray}
This completes the proof of the theorem. 
\end{proof}

\vspace{-.1in}
\section*{Appendix D -- Proof of Lemma \ref{lemma:stable}}
We first prove Lemma \ref{lemma:stable}.  
\begin{proof} (Lemma \ref{lemma:stable}) 
We prove the lemma by contradiction. Suppose the conclusion is not true. Then, for any finite constant $M$, there exists a time $t$ such that $Q(t)>M$. 

Since $\lim_{T\rightarrow\infty}\frac{1}{T}\sum_{t=0}^{T-1}R(t)\leq1-\eta$ with probability one, we see that for  any finite starting time $t_0$ and for any $\epsilon>0$, there exists a time $T_{(\epsilon)}<\infty$ such that for any $T\geq T_{(\epsilon)}$,  
\begin{eqnarray}
\frac{1}{T}\sum_{t=t_0}^{t_0+T-1}R(t)\leq1-\eta+\epsilon, \,\,\text{w.p.1}. \label{eq:rate-converge}
\end{eqnarray}

Now fix $\epsilon=\eta/2$ and choose $M=A_{\max}T_{(\eta/2)}$. Let $t^*$ be the time when $Q(t^*)>M$ and let $t^*_0$ be the beginning of the busy period during which the event $\{Q(t^*)>M\}$ happens, i.e., $Q(t_0^*-1)=0$, and for any time $t\in[t^*_0, t^*]$,  $Q(t)>0$. We see that: 
\begin{eqnarray*}
&&\quad\,\,\, Q(t) = \sum_{\tau=t^*_0}^{t-1}R(\tau) - (t-t^*_0-1), \\
&&\Rightarrow Q(t^*) = \sum_{\tau=t^*_0}^{t^*-1}R(\tau) - (t^*-t^*_0-1). 
\end{eqnarray*}
Since $M=A_{\max}T_{(\eta/2)}$, we must have $t^*-t^*_0-1\geq T_{(\eta/2)}$, for otherwise $\sum_{\tau=t^*_0}^{t^*-1}R(\tau)\leq M$. In this case, using (\ref{eq:rate-converge}) and $\epsilon=\eta/2$, we have with probability $1$ that: 
 \begin{eqnarray*}
&&Q(t^*) = \sum_{\tau=t^*_0}^{t^*-1}R(\tau) - (t^*-t^*_0-1)\\
&&\qquad\,\,\,\,\leq (1-\eta/2) (t^*-t^*_0-1)  - (t^*-t^*_0-1)\leq0. 
\end{eqnarray*}
This contradicts the fact that $Q(t^*)>M$. Hence,  $Q(t)\leq M$ with probability $1$ and  $Q(t)$ is stable. 
\end{proof}

\vspace{-.1in}
\section*{Appendix E -- Proof of Theorem \ref{theorem:rate-split}}
Now  we prove Theorem \ref{theorem:rate-split}. 
\begin{proof} (Theorem \ref{theorem:rate-split}) 
We use induction to prove the theorem. The idea is to construct a feasible rate allocation profile that balances the input and output rates for each switch module in the fictitious network.

We first show that the result holds for a $4\times4$ Benes network. In this case, the fictitious network is shown in Fig. \ref{fig:benes-4x4-fic}. 
\begin{figure}[cht]
  \centering
  \vspace{-.1in}
\includegraphics[height=0.8in, width=2.0in]{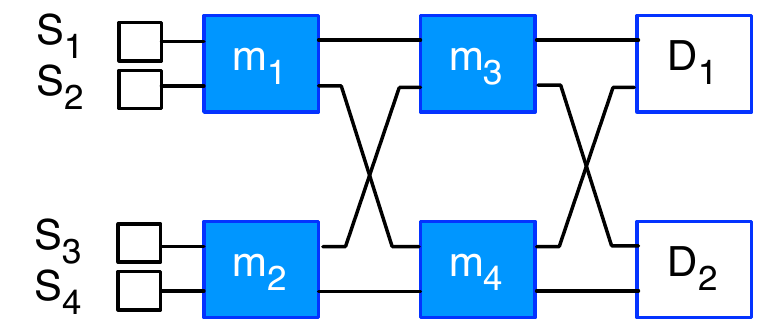}
\vspace{-.1in}
  \caption{The fictitious network for a $4\times4$ Benes network.}
  \label{fig:benes-4x4-fic}
  \vspace{-.1in}
\end{figure}

Suppose the arrival rate vector is $\bv{r}=(r_{sd}, \,\forall\,s, d)\in\Lambda_2=\{\bv{r}\,|\,\sum_{s=1}^4r_{sd}\leq1,\, \sum_{d=1}^4r_{sd}\leq1,\,\,\forall\,s, d\}$. We construct a stabilizing rate allocation profile $\bv{\mu}(\bv{r})$ as follows: 
\begin{eqnarray*}
\hspace{-.3in}&&\mu_{s_im_1}^{\tsf{U}}=\sum_{d=1,2}r_{s_id},\,\mu_{s_im_1}^{\tsf{L}}=\sum_{d=3,4}r_{s_id},\,\,\forall\,s_i=1,2,\\
\hspace{-.3in}&&\mu_{s_im_2}^{\tsf{U}}=\sum_{d=1,2}r_{s_id},\,\mu_{s_im_2}^{\tsf{L}}=\sum_{d=3,4}r_{s_id},\,\,\forall\,s_i=3,4,\\
\hspace{-.3in}&&\mu_{m_1, m_3}^{\tsf{U}}=\mu_{m_1, m_4}^{\tsf{U}}=\frac{1}{2}\sum_{d=1,2}(r_{1d}+r_{2d}),\\
\hspace{-.3in}&& \mu_{m_1, m_3}^{\tsf{L}}=\mu_{m_1, m_4}^{\tsf{L}}=\frac{1}{2}\sum_{d=3,4}(r_{1d}+r_{2d}),\\
\hspace{-.3in}&&\mu_{m_2, m_3}^{\tsf{U}}=\mu_{m_2, m_4}^{\tsf{U}}=\frac{1}{2}\sum_{d=1,2}(r_{3d}+r_{4d}),\\
\hspace{-.3in}&& \mu_{m_2, m_3}^{\tsf{L}}=\mu_{m_2, m_4}^{\tsf{L}}=\frac{1}{2}\sum_{d=3,4}(r_{3d}+r_{4d}), \\
\hspace{-.3in}&&\mu_{m_3, D_1}^{\tsf{U}}=\mu_{m_4, D_1}^{\tsf{U}}=\frac{1}{2}\sum_{d=1,2}\sum_sr_{sd},\\
\hspace{-.3in}&& \mu_{m_3, D_2}^{\tsf{L}}=\mu_{m_4, D_2}^{\tsf{L}}=\frac{1}{2}\sum_{d=3,4}\sum_sr_{sd}. 
\end{eqnarray*}
Since $\bv{r}\in\Lambda_2$, it can be verified that  $\bv{\mu}(\bv{r})$ satisfies all the constraints (\ref{eq:stable-rate-cond1}) - (\ref{eq:stable-rate-cond4}). Hence, it is a stabilizing rate allocation profile. This proves the $4\times4$ case. 

Now suppose the result holds for the $2^{n-1}\times2^{n-1}$ Benes network, we show that it also holds for the  $2^n\times2^n$ Benes network $\mathbb{B}_n$. To do so, let $\bv{r}\in\Lambda_n$ denote the input vector to $\mathbb{B}_n$ and we construct a stabilizing rate allocation profile as follows. 


First, for each input server $s$, we let 
\begin{eqnarray}
\mu_{s, m(s)}^{\tsf{U}}=\sum_{d\leq2^{n-1}}r_{sd}, \,\mu_{s, m(s)}^{\tsf{L}}=\sum_{d>2^{n-1}}r_{sd}. \label{eq:server-rate-allocation}
\end{eqnarray}
Then, for each $m\in\script{C}_1$, we let: 
\begin{eqnarray}
\hspace{-.3in}&&\mu^{\tsf{U}}_{m, m_u}=\mu^{\tsf{U}}_{m, m_l}=\frac{1}{2}\sum_{s\in\{2i_m-1, 2i_m\}}\sum_{d\leq2^{n-1}}r_{sd}, \label{eq:rate-split-c1-1}\\
\hspace{-.3in}&&\mu^{\tsf{L}}_{m, m_u}=\mu^{\tsf{L}}_{m, m_l}=\frac{1}{2}\sum_{s\in\{2i_m-1, 2i_m\}}\sum_{d>2^{n-1}}r_{sd}. \label{eq:rate-split-c1-2}
\end{eqnarray}
Here $s=2i_m-1$ and $2i_m$ are the input servers that connect to $m$. 
Note that (\ref{eq:rate-split-c1-1}) and (\ref{eq:rate-split-c1-2}) can also be viewed as equally splitting the traffic of each flow going through $m\in\script{C}_1$ to its two next-hop nodes $m_u$ and $m_l$, one in the upper subnetwork and the other in the lower subnetwork.  We thus take these as the traffic input rates to the two subnetworks of the Benes network. 

Now consider the upper subnetwork and label all the input and output ports of the upper subnetwork by $s'\in\{1,..., 2^{n-1}\}$ and $d'\in\{1, ..., 2^{n-1}\}$. 
According to the construction rules of $\mathbb{B}_n$ in Section \ref{section:benes-connect}, an input port $s'$ is connected by the switch module in row $s'$  in $\script{C}_1$ of $\mathbb{B}_n$; while an outport $d'$ connects to the switch module in row $d'$ in $\script{C}_{2n-1}$ of $\mathbb{B}_n$. 
These imply that the traffic going from input port $s'$ to output port $d'$ in the upper $2^{n-1}\times2^{n-1}$ subnetwork indeed consists of the traffic going from input ports $2s'-1$ and $2s'$ to $2d'-1$ and $2d'$ in $\mathbb{B}_n$. Denote the rate of this traffic by $\hat{r}_{s'd'}$. 
Using (\ref{eq:rate-split-c1-1}) and (\ref{eq:rate-split-c1-2}), we have: 
\begin{eqnarray}
&&\hat{r}_{s'd'}=\frac{1}{2}\bigg[r_{(2s'-1)(2d'-1)}+r_{(2s'-1)(2d')}\label{eq:subnetwork-rate-eq}\\
&&\qquad\qquad\qquad+r_{(2s')(2d'-1)}+r_{(2s')(2d')}\bigg]. \nonumber
\end{eqnarray}
Hence, we have: 
\begin{eqnarray}
 \sum_{s'}\hat{r}_{s'd'} =\frac{1}{2}\sum_{s}(r_{s(2d'-1)}+r_{s(2d')})\leq1.\label{eq:sum-source}
\end{eqnarray}
Similarly, we have: 
\begin{eqnarray}
\sum_{d'}\hat{r}_{s'd'} = \frac{1}{2}\sum_{d}(r_{(2s'-1)d}+r_{(2s')d})\leq1. \label{eq:sum-dst}
\end{eqnarray}
(\ref{eq:sum-source}) and (\ref{eq:sum-dst}) thus imply that $\hat{\bv{r}}\in\Lambda_{n-1}$. 
Hence, by induction, there exists a stabilizing rate allocation $\hat{\bv{\mu}}^{\text{up}}(\hat{\bv{r}})=(\mu_{m, m'}^{\tsf{U}, \text{up}}, \mu_{m, m'}^{\tsf{L}, \text{up}}, \,\forall\, m, m')$ that serves the arrival rate vector $\hat{\bv{r}}$ within the upper $2^{n-1}\times2^{n-1}$ subnetwork in a symmetric manner, i.e., satisfies (\ref{eq:rate-split}). Similarly, one can show that there exists a balanced stabilizing rate allocation $\hat{\bv{\mu}}^{\text{low}}(\hat{\bv{r}})=(\mu_{m, m'}^{\tsf{U}, \text{low}}, \mu_{m, m'}^{\tsf{L}, \text{low}}, \,\forall\, m, m')$ for the lower subnetwork. 

Now a stabilizing rate allocation profile for $\mathbb{B}_n$ can be constructed as follows: 
\begin{itemize}
\item For an input server $s\in\script{S}$, we use $\mu_{s, m(s)}^{\tsf{U}}$ and $\mu_{s, m(s)}^{\tsf{L}}$ as in (\ref{eq:server-rate-allocation}). 
\item For a switch module $m\in\script{C}_1$, we use $\mu^{\tsf{U}}_{m, m_u}$, $\mu^{\tsf{U}}_{m, m_l}$, $\mu^{\tsf{L}}_{m, m_u}$, and $\mu^{\tsf{L}}_{m, m_l}$ as in (\ref{eq:rate-split-c1-1}) and (\ref{eq:rate-split-c1-2}). 
\item For the switch modules in the upper subnetwork, use  $\hat{\bv{\mu}}^{\text{up}}(\hat{\bv{r}})$; for the switch modules in the lower subnetwork, use  $\hat{\bv{\mu}}^{\text{low}}(\hat{\bv{r}})$.
\end{itemize}
It can be verified that this rate vector satisfies all the constraints (\ref{eq:stable-rate-cond1}) - (\ref{eq:stable-rate-cond4}), and thus is a stabilizing rate allocation vector for $\mathbb{B}_n$. By induction, this proves the theorem. 
\end{proof}

\vspace{-.1in}

\bibliographystyle{unsrt}
\bibliography{../mybib}

\begin{thebibliography}{10}

\bibitem{jellyfish-12}
A.~Singla, C.~Hong, L.~Popa, and P.~B. Godfrey.
\newblock Jellyfish: Networking data centers randomly.
\newblock {\em Proceedings of NSDI}, 2012.

\bibitem{vl2-sigcomm09}
A.~Greenberg, J.~R. Hamilton, N.~Jain, S.~Kandula, C.~Kim, P.~Lahiri,
  D.~A.~Maltz nd~P.~Patel, and S.~Sengupta.
\newblock Vl2: A scalable and flexible data center network.
\newblock {\em Proceedings of SIGCOMM}, 2009.

\bibitem{dcell-sigcomm08}
C.~Guo, H.~Wu, K.Tan, L.~Shi, Y.~Zhang, and S.~Lu.
\newblock Dcell: A scalable and fault-tolerant network structure for data
  centers.
\newblock {\em Proceedings of SIGCOMM}, 2008.

\bibitem{scafida-sigcomm10}
L.~Gyarmati and T.~Anh Trinh.
\newblock Scafida: A scale-free network inspired data center architecture.
\newblock {\em Proceedings of SIGCOMM}, 2010.

\bibitem{scalable-sigcomm08}
M.~Al-Fares, A.~Loukissas, and A.~Vahdat.
\newblock A scalable commodity data center network architecture.
\newblock {\em Proceedings of SIGCOMM}, 2008.

\bibitem{camdoop-nsdi-12}
P.~Costa, A.~Donnelly, A.~Rowstron, and G.~OÕShea.
\newblock Camdoop: Exploiting in-network aggregation for big data applications.
\newblock {\em Proceedings of NSDI}, 2012.

\bibitem{cthrough-sigcomm10}
G.~Wang, D.~G. Andersen, M.~Kaminsky, K.~Papagiannaki, T.~S.~E. Ng, M.~Kozuch,
  and M.~Ryan.
\newblock c-through: Part-time optics in data centers.
\newblock {\em Proceedings of SIGCOMM}, 2010.

\bibitem{benes-net}
V.~E. Bene$\check{s}$.
\newblock Permutation groups, complexes and rearrangeable connecting network.
\newblock {\em Bell System Technical Journal, 43, 4:1619Ð1640}, 1964.

\bibitem{benes-net-book}
V.~E. Bene$\check{s}$.
\newblock Mathematical theory of connecting networks and telephone traffic.
\newblock {\em Academic Press Inc., New Yrok}, 1965.

\bibitem{Kellyelastic97}
F.~{K}elly.
\newblock Charging and rate control for elastic traffic.
\newblock {\em European Transactions on Telecommunications}, vol. 8, pp. 33-37,
  1997.

\bibitem{layering-chiang07}
M.~Chiang, S.~H. Low, A.~R. Calderbank, and J.~C. Doyle.
\newblock Layering as optimization decomposition: A mathematical theory of
  network architectures.
\newblock {\em Proceedings of the IEEE}, Vol. 95, No. 1, January 2007.

\bibitem{neelynowbook}
L.~Georgiadis, M.~J. Neely, and L.~Tassiulas.
\newblock {\em Resource Allocation and Cross-Layer Control in Wireless
  Networks}.
\newblock Foundations and Trends in Networking Vol. 1, no. 1, pp. 1-144, 2006.

\bibitem{neelyfairness}
M.~J. Neely, E.~Modiano, and C.~Li.
\newblock Fairness and optimal stochastic control for heterogeneous networks.
\newblock {\em IEEE/ACM Trans. on Networking, vol. 16, no. 2, pp. 396-409},
  April 2008.

\bibitem{ying_clusterbp_infocom08}
L.~Ying, R.~Srikant, and D.~Towsley.
\newblock Cluster-based back-pressure routing algorithm.
\newblock {\em Proc. of IEEE INFOCOM}, April 2008.

\bibitem{buisrikant_infocom09}
L.~Bui, R.~Srikant, and A.~Stolyar.
\newblock Novel architectures and algorithms for delay reduction in
  back-pressure scheduling and routing.
\newblock {\em Proceedings of IEEE INFOCOM 2009 Mini-Conference}, April 2009.

\bibitem{neelypowerjsac}
M.~J. Neely, E.~Modiano, and C.~E. Rohrs.
\newblock Dynamic power allocation and routing for time-varying wireless
  networks.
\newblock {\em IEEE Journal on Selected Areas in Communications, Vol 23, NO.1,
  pp. 89-103}, January 2005.

\end{thebibliography}

\end{document}